\setlist[itemize]{noitemsep, topsep=0pt}
\setlist[enumerate]{noitemsep, topsep=0pt}
\setlist[itemize]{leftmargin=*}
\setlist[enumerate]{leftmargin=*}
\providecommand{\U}[1]{\protect\rule{.1in}{.1in}}
\providecommand{\norm}[1]{{\left\lVert#1\right\rVert}}
\providecommand{\abs}[1]{{\left\lvert#1\right\rvert}}
\providecommand{\pr}[1]{{\left(#1\right)}} 
\providecommand{\pp}[1]{{\left[#1\right]}}
\providecommand{\set}[1]{{\left\lbrace#1\right\rbrace}}
\providecommand{\scal}[1]{{\left\langle#1\right\rangle}}
\providecommand{\AL}[1]{\textcolor{blue}{#1}}
\newcommand{\normi}[3]{\norm{#1}_{
		\ifthenelse{\equal{#2}{1}}{H_0^1\pr{\mathcal{O}_{#3}}}{%
			\ifthenelse{\equal{#2}{-1}}{H^{-1}\pr{\mathcal{O}_{#3}}}{}}}}
\newcommand{\subjclass}[2][2020]{%
	\let\@oldtitle\@title%
	\gdef\@title{\@oldtitle\footnotetext{\textbf{#1 \emph{Mathematics subject classification.}} #2}}%
}
\newcommand{\keywords}[1]{%
	\let\@@oldtitle\@title%
	\gdef\@title{\@@oldtitle\footnotetext{\textbf{\emph{Key words.}} #1.}}%
}
\newtheorem{theorem}{Theorem}
\newtheorem{lemma}[theorem]{Lemma}
\newtheorem{problem}{Problem}
\newtheorem{proposition}[theorem]{Proposition}
\newtheorem{remark}[theorem]{Remark}
\newenvironment{proof}[1][Proof]{\noindent\textbf{#1.} }{\ \rule{0.5em}{0.5em}}
\newtheorem{ass}{Assumption}
\date{}
\begin{document}
\title{$\mathbb{L^\infty}$/$\mathbb{L}^1$ Duality Results In Optimal Control Problems}
\author[1,2]{Dan Goreac}
\author[3]{Alain Rapaport}
\affil[1]{School of Mathematics and Statistics, Shandong University,\newline
Weihai 264209, PR China}
\affil[2]{LAMA, Univ.~Gustave Eiffel, UPEM, Univ.~Paris Est Creteil, CNRS, F-77447 Marne-la-Vall\'ee, France}
\affil[ ]{E-mail: \texttt{dan.goreac@univ-eiffel.fr}\vspace{3mm}}
\affil[3]{MISTEA, Univ.~Montpellier, INRAE, Institut Agro,\newline 34060 Montpellier, France}
\affil[ ]{\texttt{E-mail: alain.rapaport@inrae.fr}}
\date{\today}
\maketitle


\begin{abstract}
We provide a duality result linking the value function for a control problem with supremum cost $H$ under an isoperimetric inequality $G \leq g_{max}$, and the value function for the same controlled dynamics with cost $G$ and state constraint $H \leq h_{max}$. This duality is proven for initial conditions at which lower semi-continuity of the value functions can be guaranteed, and is completed with optimality considerations. Furthermore, we provide structural assumptions on the dynamics under which such regularity can be established. As a by-product, we illustrate the partial equivalence between recent works
dealing with non-pharmaceutically controlled epidemics under peak or budget restrictions.

\noindent {\bf Key words.} Optimal control, $L^\infty$ cost, isoperimetric inequality, state constraint, value function, duality. 
\end{abstract}


\section{Introduction}

In the present paper, given a controlled system set on some Euclidean space and whose solution is denoted by $x^{x_0,u}$ for initial condition $x_0$ and control $u(\cdot)$, we focus on the duality between an $\mathbb{L}^\infty$-cost problem under an isoperimetric (or area) inequality
\begin{align*}
\overline{V}(x_0;g_0):=\inf_{u(\cdot)} \sup_{t \geq 0} h(x^{x_0,u}(t)) \quad \mbox{subject to} \quad \int_0^\infty g(x^{x_0,u}(t))dt\leq g_0,
\end{align*} 
and the optimization of the total area under a state constraint, i.e. 
\begin{align*}
\underline{V}(x_0;h_0):=\inf_{u(\cdot)} \int_0^\infty g(x^{x_0,u}(t))dt \quad \mbox{subject to} \quad \sup_{t \geq 0} h(x^{x_0,u}(t)) \leq h_0.
\end{align*} 
The precise formulations and assumptions will be given in the following sections.\\
Even when isoperimetric constraints are not enforced, the $\mathbb{L}^\infty$ problem is particularly hard to tackle, especially when the optimal control is sought. When the time horizon is finite, dynamic programming approaches have been proposed (e.g. \cite{barron_ishii_89}) to characterize the value function as a viscosity solution to the associated Hamilton-Jacobi equation. On the other hand, handling a running-cost problem, even under state constraints, is, perhaps, more accessible, albeit the need for structural conditions of the domain describing the constraints (see, for instance, \cite{Soner86_1}, \cite{frankowska_plaskacz_98}, \cite{frankowska_vinter_98}, \cite{frankowska_plaskacz_00}, \cite{BFZ2011}). Furthermore, such problems fall under the realm of Pontryagin's Maximum Principle and are, therefore, more likely to provide a candidate for optimality.\\
From this point of view, a result linking the value functions of the two aforementioned problems finds its importance, especially if this is accompanied by links between the optimal controls.

With this in mind, our main result stated in Theorem \ref{ThMain} shows that the value functions $\underline{V}$ and $\overline{V}$ are (generalized) inverse of each-other. This is established under a natural lower semi-continuity assumption. Furthermore, uniqueness of the optimal control in one of the problems implies optimality of the same control for the remaining problem. This completes the duality of the two formulations. 

\medskip

The present work has been indeed motivated by two complementary contributions to the study of an epidemiological model.

\begin{enumerate}
    \item In the recent paper \cite{AFG_2022}, the authors consider, in connection with a SIR-model, the problem of minimizing a budget functional corresponding to some $g$ running cost function, while maintaining constrained the infection peak to some upper ICU-related constraint i.e. $h(s,i):=i\leq i_{\max}$ (see also \cite{Miclo}). The control parameter takes its values in some compact set $U:=\pp{0,\overline{u}}$ specifying no-confinement to maximally acceptable confinement policies $\overline{u}$. For a particular choice of the running cost $g(s,i,u):=u$, it is shown in \cite{AFG_2022} that the "greedy" control acting only as the trajectory reaches the boundary of viability kernel linked to the $i_{\max}$ restriction is the unique optimal one.  Further insights on the geometry and Hamilton-Jacobi approaches make the object of \cite{FGLX_2022}. 
    \item On the other hand, in \cite{MR_2022}, the authors consider a complementary and dual problem.  Their aim is to keep the peak of infection as low as possible given a budgetary constraint. Using Green-inspired techniques, the main result in  \cite[Proposition 2]{MR_2022} proves directly the optimality of the same type of greedy policy. The analysis is restrained to a rectangle $\pp{0,\frac{\gamma}{\beta(1-\overline{a})}}\times \pp{0,i_{\max}}$, the corner $(\frac{\gamma}{\beta(1-\overline{a})},i_{\max})$ roughly corresponding to a disease-free equilibrium (DFE) in a maximally-confined environment (corresponding to policies $\overline{u}$).
\end{enumerate}

As a by-product of our duality result, we provide, in Section \ref{SecAppl}, another proof for the optimality of the greedy control in the problem of containing the peak of infection given a budgetary constraint.  This is just an illustration of the paradigm emphasized in our opening argument: the a priori harder control problem $\overline{V}$ can be reduced to $\underline{V}$ to which Pontryagin arguments can be applied. If the optimal control is unique, then, owing to Theorem \ref{ThMain}, this is equally an optimal control for $\overline{V}$.

\medskip

The paper is organized as follows. In Section \ref{SecPrelim} we specify the dynamics, the assumptions on the data and the precise formulations for our control problems. Particular emphasis is put on the viability kernels in terms of support domains of the value functions. The main contributions of the paper are given in Section \ref{SecMain}. On the one hand, we provide, under lower semi-continuity assumptions, the duality result linking value functions and optimal controls of the two problems in Theorem \ref{ThMain}. On the other hand, we specify, in Section \ref{SubsectionSemicont}, explicit assumptions on the dynamics under which such lower semi-continuity can be achieved. The Section \ref{SecAppl} is devoted to the illustration of the implications of our main result on the SIR model with non-pharmaceutical control.

\section{Preliminaries}\label{SecPrelim}

\subsection{Dynamics and Assumptions}

In this work, we shall deal with a controlled dynamics
\begin{equation}
\label{Eq0}\begin{cases} \dot x(t) =f\pr{x(t),u(t)},\ \mbox{ a.e. } t\geq 0;\\
x\pr{0}=x_0\in \Omega 
\end{cases}
\end{equation}
where $\Omega$ is a subset of the $n\in\mathbb{N}^*$ -dimensional Euclidean space $\mathbb{R}^n$, and we assume $\Omega$ to have non-empty interior. We require the following standard assumptions.
\begin{ass}
\label{ass1}
$ $
\begin{enumerate}
\item The control space $U$ is a compact (subset of a) metric space. The family $\mathbb{L}^0\pr{\mathbb{R};U}$ of Borel-measurable functions $u:\mathbb{R}\longrightarrow U$ will be referred to as \emph{admissible control policies}.
\item The map 
$f:\Omega\times U\rightarrow \mathbb{R}^n$ is  continuous and $\pp{f}_1$-Lipschitz continuous in the state variable $x$ uniformly w.r.t. the control $u$ i.e.\[\pp{f}_1:=\underset{u\in U}{\sup}\ \underset{x,y\in\Omega,\ x\neq y}{\sup}\ \frac{\abs{f(x,u)-f(y,u)}}{\abs{x-y}}<+\infty.\]
\item The functions $h:\Omega\rightarrow\mathbb{R}$ and $g:\Omega\times U\rightarrow\mathbb{R}_{+}$ 
are bounded uniformly continuous and Lipschitz in the state variable $x$ uniformly w.r.t. the control $u$ i.e. \[\underset{u\in U}{\sup}\ \underset{x,y\in\Omega,\ x\neq y}{\sup}\ \frac{\abs{g(x,u)-g(y,u)}+\abs{h(x)-h(y)}}{\abs{x-y}}<+\infty.\]
We will denote by $g_\infty := \underset{(x,u) \in \Omega\times U}{\sup}g(x,u)$.
\item The set $\Omega$ is forward invariant, i.e.~any solution $x(\cdot)$ of \eqref{Eq0} with $x_0 \in \Omega$ vverifies $x(t) \in \Omega$, for any $t \geq 0$.
\end{enumerate}
\end{ass}

Under this assumption, the system \eqref{Eq0} admits an unique absolutely continuous solution denoted $x^{x_0,u}(\cdot)$ for $x_0 \in \Omega$ and  $ u \in \mathbb{L}^0\pr{\mathbb{R};U}$.

\subsection{The Control Problems}

Let us consider the extended dynamics with an additional scalar component $z$ that integrates the running cost, that is
\begin{equation}
\label{Eq0+}
\begin{cases}
\dot x(t)=f\pr{x(t),u(t)},\\
\dot z(t)=g\pr{x(t),u(t)},\\
x\pr{0}=x_0\in\Omega, \; z(0)=z_0\in\mathbb{R}
\end{cases}
\end{equation}
whose solution is denoted $(x^{x_0,u}(\cdot),z^{x_0,z_0,u}(\cdot))$.

We recall our aim to address problems in which the running maximum is minimized while obeying an area upper bound or, vice-versa, minimize the area quantity while imposing a running constraint on the trajectories. In this context, and with respect to the newly-introduced control system, let us define the parameterized viability kernels as follows.
\begin{equation}\label{ViabKer}
\begin{array}{l}
 Viab_h(h_0):=\set{x_0\in\Omega :\ \exists u\in \mathbb{L}^0\pr{\mathbb{R}_+;U}; \; h\pr{x^{x_0,u}(t)}\leq h_0,\; \forall t\geq 0},\\
Viab_g(g_0):=\set{x_0\in\Omega :\ \exists u\in \mathbb{L}^0\pr{\mathbb{R}_+;U}; \; z^{x_0,0,u}(t)\leq g_0,\; \forall t\geq 0},\\
Viab_{hg}(h_0,g_0):=\big\{x_0\in\Omega :\ \exists u\in \mathbb{L}^0\pr{\mathbb{R}_+;U} ; \; h\pr{x^{x_0,u}(t)}\leq h_0, \; z^{x_0,0,u}(t)\leq g_0, \;  \forall t\geq 0\big\} .
\end{array}\end{equation}

\begin{remark}
$ $
\begin{enumerate}
\item The reader will have noticed that $Viab_h(h_0)$ is the largest set of initial $x_0$ for which the upper-bound $h_0$ is kept on $h$. Such sets are forward in time viable.
\item The second set $Viab_g(g_0)$ is not a viability kernel per se.  To make it one, $z$ should be considered together with the initial datum $z_0$ instead of $0$.  But,  then,  \[Viab_g(g_0)=\set{x_0\in\Omega :\ \exists u\in \mathbb{L}^0\pr{\mathbb{R}_+;U}, \exists z_0\in\mathbb{R}_+\textnormal{ s.t. }z^{x_0,z_0,u}(t)-z_0\leq g_0,\; t\geq 0}. \]
From this point of view, the initial datum $z_0$ acts as a control as well.
\item Similar assertions hold true for $Viab_{hg}$.
\item Although obvious enough, let us point out that $Viab_h,Viab_g$ considered as set-valued maps enjoy monotonicity properties (with the partial order given by the inclusion of sets).  Similar assertions can be given for the set-valued map $Viab_{hg}$ if one considers the order relation $(h_0,g_0)\prec (h_0',g_0')$ defined by $h_0\leq h_0'$ and $g_0\leq g_0'$.
\end{enumerate}
\end{remark}

We first consider the optimal problem with state constraint.
\begin{problem}\label{Prob1}
Given $x_0 \in \Omega$ and $h_{0}\in\mathbb{R}$, 
\begin{eqnarray*}
\underline {\cal P}(x_0;h_{0}): &\textnormal{minimize }& \underline{J}\pr{x_0,u}:=\int_0^{+\infty} g\pr{x^{x_0,u}(t),u(t)}dt\\
& \textnormal{over } &u\in\mathbb{L}^0\pr{\mathbb{R}_+;U},\\
& \textnormal{s.t.  }& h\pr{x^{x_0,u}(t)}\leq h_{0},\ \forall t\geq 0.
\end{eqnarray*}
\end{problem}
The value function is denoted by $\underline{V}\pr{x_0;h_{0}}$, which is set to $+\infty$ when the set of controls satisfying the constraint is empty.\\

We consider the dual problem, with integral constraint
\begin{problem}\label{Prob2}
Given $x_0 \in \Omega$ and $g_{0}\in\mathbb{R}_+$,
\begin{eqnarray*}
\overline {\cal P}(x_0;g_{0}): &\textnormal{minimize }&\overline{J}\pr{x_0,u}:=\sup_{t\geq 0}h\pr{x^{x_0,u}(t)}\\ & \textnormal{over } &u\in\mathbb{L}^0\pr{\mathbb{R}_+;U},\\
& \textnormal{s.t.  }& \int_0^{+\infty} g\pr{x^{x_0,u}(t),u(t)}dt\leq g_{0}.
\end{eqnarray*} 
\end{problem}
The value function is denoted by $\overline{V}\pr{x_0;g_{0}}$, which is set to $+\infty$ when the set of controls satisfying the constraint is empty.\\

\medskip

We shall denote in the following partial inverses of the viability kernel map $V_{hg}$ as follows
\begin{align*}
& Viab_{hg}^{-g}(x_0;h_{0}) := \{ g_{0} \in \mathbb{R}_+ \; ; \; x_0 \in Viab_{hg}(h_{0},g_{0}) \},\\
& Viab_{hg}^{-h}(x_0;g_{0}) := \{ h_{0} \in \mathbb{R} \; ; \; x_0 \in Viab_{hg}(h_{0},g_{0}) \}
\end{align*}
Then, one can formulate the following observations.
\begin{remark}
$ $
\begin{enumerate}
\item With the viability kernel notations,  and by interpreting $Viab_{hg}$ as a set-valued map, our problems amount to finding 
\begin{equation}
\label{corresV_viab}
\begin{cases}
\underline{V}\pr{x_0;h_0}&=\inf Viab_{hg}^{-g}\pr{x_0;h_0},\\
\overline{V}\pr{x_0;g_0}&=\inf Viab_{hg}^{-h}\pr{x_0;g_0}.
\end{cases}\end{equation}
This also renders coherent the fact that we have set $+\infty$ as values whenever the sets to which the $\inf$ operator is to be applied are empty. 
\item Furthermore, we have 
\begin{equation}
\label{defdom}
\begin{cases}
Dom\pr{\underline{V}\pr{x_0;\cdot}}&=\displaystyle \bigcup_{h_0\in\mathbb{R}} Dom \pr{Viab_{hg}^{-g}\pr{x_0;h_0}},\\[5mm]
Dom\pr{\overline{V}\pr{x_0;\cdot}}&=\displaystyle \bigcup_{g_0\in\mathbb{R}_+} Dom \pr{Viab_{hg}^{-h}\pr{x_0;g_0}},
\end{cases}
\end{equation}where, as usual, the domain $Dom(F)$ of a set-valued map $F:\mathbb{R}\rightsquigarrow\mathbb{R}^n$ is the family of points $\theta\in\mathbb{R}$ for which $F(\theta)\neq \emptyset$. In particular, the previously-introduced viability kernel $Viab_{hg}$ offer a complete description of the two domains. We choose to keep notations like $Dom\pr{\underline{V}\pr{x_0;\cdot}}$ only for our readers' sake.
\item The functions $\underline{V}(x_0;\cdot)$, $\overline{V}(x_0;\cdot)$ are bounded on their domains, as $g$ and $h$ are bounded functions.
\end{enumerate}
\end{remark}

\bigskip

Let us begin with some elementary and immediate properties of the two value functions.
\begin{proposition}\label{PropMonotonyRC}Let $x_0\in \Omega$.
\begin{enumerate}
\item $\overline{V}(x_0;\cdot)$ and $\underline{V}(x_0;\cdot)$ are non-increasing. 
\item If $\underline{V}(x_0;\cdot)$, resp. $\overline{V}(x_0;\cdot)$, is lower-semi-continuous, then it is right-continuous on its domain.
\end{enumerate}
\end{proposition}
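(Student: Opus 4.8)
The plan is to prove the two assertions in order, treating monotonicity first since it feeds directly into the continuity claim.

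\textbf{Step 1: Monotonicity.} For $\overline{V}(x_0;\cdot)$, I would argue directly from Problem \ref{Prob2}. If $g_0 \leq g_0'$, then any control $u$ admissible for $\overline{\cal P}(x_0;g_0)$ (i.e.~satisfying $\int_0^\infty g(x^{x_0,u}(t),u(t))\,dt \leq g_0$) is automatically admissible for $\overline{\cal P}(x_0;g_0')$, since $g_0 \leq g_0'$ enlarges the feasible set. The objective $\overline{J}(x_0,u) = \sup_{t\geq 0} h(x^{x_0,u}(t))$ does not depend on $g_0$, so taking the infimum over the larger feasible set can only decrease the value: $\overline{V}(x_0;g_0') \leq \overline{V}(x_0;g_0)$. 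The same reasoning applies to $\underline{V}(x_0;\cdot)$: if $h_0 \leq h_0'$, the constraint $h(x^{x_0,u}(t)) \leq h_0$ implies $h(x^{x_0,u}(t)) \leq h_0'$, so the feasible control set grows with $h_0$, while $\underline{J}(x_0,u)$ is independent of $h_0$; hence $\underline{V}(x_0;h_0') \leq \underline{V}(x_0;h_0)$. (Alternatively, one can invoke \eqref{corresV_viab} together with the monotonicity of $Viab_{hg}$ noted in the preceding remark: enlarging either parameter enlarges $Viab_{hg}^{-g}$ or $Viab_{hg}^{-h}$, so its infimum decreases.)

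\textbf{Step 2: Right-continuity from lower semi-continuity.} Suppose $\underline{V}(x_0;\cdot)$ is lower semi-continuous and let $h_0$ be an interior-or-left point of its domain so that right-continuity is meaningful, i.e.~there exist $h_0' > h_0$ in the domain. Fix a decreasing sequence $h_n \downarrow h_0$ in the domain. By monotonicity (Step 1), $\underline{V}(x_0;h_n)$ is non-decreasing in $n$ and bounded above by $\underline{V}(x_0;h_0)$, hence $\ell := \lim_n \underline{V}(x_0;h_n) = \sup_n \underline{V}(x_0;h_n) \leq \underline{V}(x_0;h_0)$ exists. On the other hand, lower semi-continuity applied along the sequence $h_n \to h_0$ gives $\underline{V}(x_0;h_0) \leq \liminf_n \underline{V}(x_0;h_n) = \ell$. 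Combining the two inequalities yields $\lim_{h_0' \downarrow h_0} \underline{V}(x_0;h_0') = \underline{V}(x_0;h_0)$; since this holds for every decreasing sequence, together with monotonicity it gives right-continuity at $h_0$. The argument for $\overline{V}(x_0;\cdot)$ is verbatim the same, with $g_0$ in place of $h_0$.

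\textbf{Main obstacle.} There is no serious analytic difficulty here; the only point requiring a little care is the bookkeeping around the \emph{domain}. The function may take the value $+\infty$ off its domain, and "right-continuous on its domain" should be read as: at every $h_0 \in Dom(\underline{V}(x_0;\cdot))$ that is not isolated from the right within that domain, the right limit equals the value. One must be slightly careful that the approximating sequence $h_n \downarrow h_0$ can be chosen inside the domain — but by monotonicity of $Viab_{hg}$ and \eqref{defdom} the domain is (up to its endpoints) an interval of the form $[\underline h, +\infty)$ or $(\underline h,+\infty)$, so such sequences exist whenever right-continuity is being asserted. Apart from that, the proof is a two-line sandwich argument: monotonicity gives one inequality, lower semi-continuity gives the reverse.
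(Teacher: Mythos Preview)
Your proof is correct and follows essentially the same route as the paper: monotonicity via enlargement of the feasible set, and right-continuity from the sandwich between the monotone upper bound and the lower semi-continuity lower bound. The paper is slightly terser (working directly with $\liminf_{h\to h_0+}$ rather than fixing a sequence) and notes, as you do, that $h_0\in Dom$ forces $[h_0,\infty)\subset Dom$, but there is no substantive difference.
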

\begin{proof}
Let us consider $g_{0}\leq g_{0^\prime}$ and $u(\cdot)$ a measurable control such that $\int_0^{+\infty} g\pr{x^{x_0,u}(t),u(t)}\leq g_{0}$, then one necessarily has $\int_0^\infty g\pr{x^{x_0,u}(t),u(t)}\leq g_{0^\prime}$, which implies $\overline{V}\pr{x_0;g_{0^\prime}}\leq\overline{V}\pr{x_0;g_{0}}$. A similar argument implies that $\underline{V}\pr{x_0;\cdot}$ is non-increasing.\\
By monotonicity,  if $h_0\in Dom\pr{\underline{V}(x_0;\cdot)}$, then $\left[h_0,\infty \right)\subset Dom\pr{\underline{V}(x_0;\cdot)}$, and one has
\[
\liminf_{h \to h_0+} \underline V(x_0;h) \leq \underline V(x_0;h_0).
\]
Under the further assumption that $\underline V(x_0;\cdot)$ is lower semi-continuous at $h_0$, one gets
\[
\liminf_{h \to h_0+} \underline V(x_0;h) = \underline V(x_0;h_0),
\]
that is the right continuity of $\underline V(x_0;\cdot)$ at $h_0$.
The property for $\overline V(x_0;\cdot)$ follows in the same way.
\end{proof}


\section{The Main Results}\label{SecMain}

We first show that a duality between problems $\overline {\cal P}$ and $\underline{\cal P}$ can be established when the value functions $\underline V$, $\overline V$ are lower semi-continuous. In a second step, we give sufficient conditions for these value functions to be semi-continuous.

\subsection{The Duality Result}\label{SubsectionMainTh}

The main results of the paper which link problems $\overline{\cal P}$ and $\underline{\cal P}$ are gathered in the following statement.

\begin{theorem}\label{ThMain}
Let $x_0\in\Omega$.
\begin{enumerate}
\item If $\overline{V}(x_0;\cdot)$ is right continuous at $g_0 \in\mathbb{R}_+$, then, for any $h_0 \in\mathbb{R}$ such that $\underline{V}(x_0;h_0)\leq g_0$, one has $\overline{V}(x_0;g_0)\leq h_0$.\\
If $\underline{V}(x_0;\cdot)$ is right continuous at $h_0 \in\mathbb{R}$, then, for any $g_0 \in\mathbb{R}_+$ such that $\overline{V}(x_0;g_0)\leq h_0$ one has $\underline{V}(x_0;h_0)\leq g_0$.
\item If the functions $\underline{V}(x_0;\cdot)$ and $\overline{V}(x_0;\cdot)$ are lower semi-continuous on their domains \eqref{defdom},  then $\underline{V}$ and $\overline{V}$ are generalized inverse i.e. 
\begin{equation}\label{GenInv}
\begin{cases}
\underline{V}\pr{x_0;h_0}=\inf\set{g_0:\ \overline{V}(x_0;g_0)\leq h_0}, \;\; h_0 \in Dom\pr{\underline{V}\pr{x_0;\cdot}} ;\\
\overline{V}\pr{x_0;g_0}=\inf\set{h_0:\ \underline{V}(x_0;h_0)\leq g_0}, \;\;
g_0 \in Dom\pr{\overline{V}\pr{x_0;\cdot}}.
\end{cases}
\end{equation}
\item Let $h_0$ be such that $\underline{V}(x_0;h_0)<+\infty$ and $\underline{V}(x_0;\cdot)$ is lower semi-continuous.  

Posit
\[
\underline h_{0}:=\inf\set{h_0^\prime:\ \underline{V}(x_0;h_0^\prime)=\underline{V}(x_0;h_0)}, \quad 
g_0:= \underline{V}(x_0;h_0)=\underline{V}(x_0;\underline h_0).
\]
If $u^*$ is optimal for Problem $\underline{\cal P}(x_0;\underline h_0)$, then $u^*$ is optimal for Problem $\overline {\cal P}(x_0;g_0)$.
\item In particular, if Problem $\underline{\cal P}(x_0;h_0)$ admits an unique optimal control $u^*$, then $u^*$ is optimal for Problem $\overline{\cal P}(x_0;g_0)$ where
\[
g_0:=\underline{V}\pr{x_0;\underset{t\geq 0}{\sup}\ h\pr{x^{x_0,u^*}(t)}}
\]
\item Let $g_0$ be such that $\overline{V}\pr{x_0;g_0}<+\infty$ and
$\overline{V}\pr{x_0;\cdot}$ is lower semi-continuous. Posit
\[
\underline g_0=\inf\set{g_0^\prime\geq 0\ :\ \overline{V}(x_0;g_0^\prime)=\overline{V}(x_0;g_0)} , \quad h_0:=\overline{V}(x_0;g_0)=\overline{V}(x_0;\underline g_0).
\]
If $u^*$ is optimal for Problem $\overline{\cal P}(x_0;\underline g_0)$, then $u^*$ is optimal for Problem $\underline {\cal P}(x_0;h_0)$.
\item In particular, if Problem $\overline{\cal P}(x_0;g_0)$ admits an unique optimal control $u^*$, then $u^*$ is optimal for Problem $\underline{\cal P}(x_0;h_0)$ where
\[
h_0:=\overline{V}\pr{x_0;\int_0^{+\infty} g\pr{x^{x_0,u^\star}(t),u^\star(t)}dt}.
\]
\end{enumerate}
\end{theorem}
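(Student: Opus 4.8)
The plan is to isolate the elementary mechanism behind the whole statement, which is exactly item 1: an $\varepsilon$-optimal control for one problem is \emph{exactly} admissible for the companion problem with the parameter relaxed by $\varepsilon$. For the first half of item 1, fix $h_0$ with $\underline V(x_0;h_0)\le g_0$ and, for $\varepsilon>0$, choose $u_\varepsilon$ with $h\pr{x^{x_0,u_\varepsilon}(t)}\le h_0$ for all $t\ge 0$ and $\int_0^{+\infty} g\pr{x^{x_0,u_\varepsilon}(t),u_\varepsilon(t)}\,dt\le \underline V(x_0;h_0)+\varepsilon\le g_0+\varepsilon$. Then $u_\varepsilon$ is admissible for $\overline{\mathcal P}(x_0;g_0+\varepsilon)$ with $\overline J(x_0,u_\varepsilon)\le h_0$, so $\overline V(x_0;g_0+\varepsilon)\le h_0$; letting $\varepsilon\downarrow 0$ and invoking right continuity of $\overline V(x_0;\cdot)$ at $g_0$ gives $\overline V(x_0;g_0)\le h_0$. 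The second half of item 1 is the mirror statement, using right continuity of $\underline V(x_0;\cdot)$. Nothing delicate happens here beyond noticing that the perturbed parameters stay in the respective domains, which is automatic since witnessing controls have just been exhibited.

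Item 2 is a packaging step. Proposition \ref{PropMonotonyRC} turns the lower semi-continuity hypotheses into right continuity on the domains, so item 1 applies at every relevant parameter. Fixing $h_0\in Dom\pr{\underline V(x_0;\cdot)}$ and writing $g^\ast:=\underline V(x_0;h_0)$, the bound $\inf\set{g_0:\overline V(x_0;g_0)\le h_0}\le g^\ast$ is the direct feasibility argument of the previous paragraph evaluated at $g^\ast+\varepsilon$, while the reverse bound is precisely the second half of item 1 applied to each admissible $g_0$ together with monotonicity of $\underline V(x_0;\cdot)$; this gives the first identity in \eqref{GenInv}, and the second follows by exchanging the roles of the two problems.

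For the optimality transfers the guiding remark is that an optimal control is automatically feasible for the companion problem, so only the exclusion of a strictly better competitor requires work. For item 3, I would first use monotonicity and lower semi-continuity of $\underline V(x_0;\cdot)$ to check that $\underline V(x_0;\cdot)\equiv g_0$ on $\pp{\underline h_0,h_0}$ and that this value is attained at $\underline h_0$, which makes the definitions of $\underline h_0$ and $g_0$ consistent. If $u^\ast$ is optimal for $\underline{\mathcal P}(x_0;\underline h_0)$ then $\int_0^{+\infty} g\pr{x^{x_0,u^\ast}(t),u^\ast(t)}\,dt=g_0$ and $\sup_{t\ge 0} h\pr{x^{x_0,u^\ast}(t)}\le\underline h_0$, so $u^\ast$ is admissible for $\overline{\mathcal P}(x_0;g_0)$ and $\overline V(x_0;g_0)\le \sup_{t\ge 0} h\pr{x^{x_0,u^\ast}(t)}$. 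Were the reverse inequality to fail, there would be $\tilde u$ feasible for $\overline{\mathcal P}(x_0;g_0)$ with $h_1:=\sup_{t\ge 0} h\pr{x^{x_0,\tilde u}(t)}<\sup_{t\ge 0}h\pr{x^{x_0,u^\ast}(t)}\le\underline h_0$; then $\tilde u$ is admissible for $\underline{\mathcal P}(x_0;h_1)$ with $\underline V(x_0;h_1)\le g_0$, and monotonicity forces $\underline V(x_0;h_1)=g_0$, contradicting the minimality of $\underline h_0$ since $h_1<\underline h_0$. Item 5 is the exact mirror image: one levels $\overline V(x_0;\cdot)$ to $h_0$ on $\pp{\underline g_0,g_0}$, transfers optimality of $u^\ast$ from $\overline{\mathcal P}(x_0;\underline g_0)$ to $\underline{\mathcal P}(x_0;h_0)$, and rules out a cheaper competitor for $\underline{\mathcal P}(x_0;h_0)$ by the minimality of $\underline g_0$.

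The part I expect to be the genuine obstacle is items 4 and 6, because they cannot simply be read off from items 3 and 5. Indeed, if $u^\ast$ is optimal for $\underline{\mathcal P}(x_0;h_0)$, the quantity $\bar h:=\sup_{t\ge 0} h\pr{x^{x_0,u^\ast}(t)}$ satisfies $\underline V(x_0;\bar h)=\underline V(x_0;h_0)=:g_0$ by the monotonicity sandwich (as $\bar h\le h_0$), so $\bar h$ belongs to the level set $\set{h':\underline V(x_0;h')=g_0}$ but may lie strictly above its infimum $\underline h_0$; hence $u^\ast$ need not be admissible for $\underline{\mathcal P}(x_0;\underline h_0)$ and item 3 does not apply directly. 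The remedy is to rerun the item 3 argument with $\bar h$ in place of $\underline h_0$, replacing the appeal to minimality of $\underline h_0$ by an appeal to \emph{uniqueness}: a feasible $\tilde u$ for $\overline{\mathcal P}(x_0;g_0)$ with $h_1:=\sup_{t\ge 0}h\pr{x^{x_0,\tilde u}(t)}<\bar h$ turns out to be optimal for $\underline{\mathcal P}(x_0;h_1)$, hence (since $h_1<\bar h\le h_0$) admissible and optimal for $\underline{\mathcal P}(x_0;h_0)$, whence $\tilde u=u^\ast$ and $\bar h=h_1<\bar h$, a contradiction; so $\overline V(x_0;g_0)=\bar h=\overline J(x_0,u^\ast)$. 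Item 6 runs along the same lines with the roles of $g$ and $h$ exchanged, replacing $\bar h$ by $\bar g:=\int_0^{+\infty} g\pr{x^{x_0,u^\ast}(t),u^\ast(t)}\,dt$ and using uniqueness of the optimal control of $\overline{\mathcal P}(x_0;g_0)$ to collapse any cheaper competitor for $\underline{\mathcal P}(x_0;h_0)$ onto $u^\ast$; it is worth noting that items 4 and 6, unlike items 3 and 5, require no lower semi-continuity.
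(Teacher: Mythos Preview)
Your proof is correct and, for items 1--3 and 5, follows essentially the same line as the paper: the $\varepsilon$-optimal control argument for item~1, the repackaging via Proposition~\ref{PropMonotonyRC} for item~2, and the contradiction against the minimality of $\underline h_0$ (resp.\ $\underline g_0$) for items~3 and~5. Your item~2 is organized as a direct two-sided bound whereas the paper argues one direction by contradiction, but the content is identical.

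The one genuine difference is in items~4 and~6. The paper reduces item~4 to item~3 by asserting that, under uniqueness, $\underline h_0=\sup_{t\ge 0} h\pr{x^{x_0,u^*}(t)}$, so that $u^*$ is optimal for $\underline{\mathcal P}(x_0;\underline h_0)$ and item~3 applies. You correctly spot that this identification is not automatic: $\bar h:=\sup_{t\ge 0} h\pr{x^{x_0,u^*}(t)}$ certainly lies in the level set $\set{h':\underline V(x_0;h')=g_0}$, but without knowing that $\underline{\mathcal P}(x_0;h')$ admits an optimum for some $h'<\bar h$ one cannot force $\bar h=\underline h_0$. Your remedy---rerunning the contradiction argument with $\bar h$ playing the role of $\underline h_0$ and replacing the appeal to minimality by the uniqueness hypothesis---is cleaner and self-contained: a competitor $\tilde u$ with $h_1<\bar h$ is directly shown to be optimal for $\underline{\mathcal P}(x_0;h_0)$, hence equal to $u^*$, which is absurd. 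As you note, this route uses only monotonicity, so lower semi-continuity is not needed for items~4 and~6; in the paper's formulation these items inherit the l.s.c.\ hypothesis from item~3 through the ``in particular''.
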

\begin{proof}
\begin{enumerate}
\item Assume $\underline{V}(x_0;h_0)\leq g_0<+\infty$ for $h_0<+\infty$. In particular, for every $\varepsilon>0$, there exists an admissible control $u^\varepsilon$ such that $\int_0^\infty g\pr{x^{x_0,u^\varepsilon}(t),u^\varepsilon(t)}dt\leq \underline{V}\pr{x_0;h_0}+\varepsilon\leq g_0+\varepsilon$ with $h\pr{x^{x_0,u^\varepsilon}(t)}\leq h_0,$ for all $t\geq 0$. Then, by definition, $\overline{V}\pr{x_0;g_0+\varepsilon}\leq h_0$.  The conclusion follows from the right-continuity of $\overline{V}(x_0;\cdot)$ at $g_0$. The remaining assertion is shown in the same way.
\item  By Proposition \ref{PropMonotonyRC} and point 1., $g_0=\underline V(x_0;h_0)$ implies $\overline V(x_0;g_0)\leq h_0$.
Then, to show \[\underline{V}\pr{x_0;h_0}=\inf\set{g_0^\prime:\ \overline{V}(x_0;g_0^\prime)\leq h_0}\] we only need to prove the inequality $\geq$.  We proceed by contradiction and assume that $\underline{V}\pr{x_0;h_0}=g_0<g^0\AL{:=}\inf\set{g_0^\prime:\ \overline{V}(x_0;g_0^\prime)\leq h_0}$.
By definition of the infimum one has $h^0:=\overline{V}(x_0;\frac{g_0+g^0}{2})> h_0$ and by monotonicity, $\overline{V}(x_0;g_0^\prime)\geq h^0,\ \forall g_0^\prime\in\pp{g_0,\frac{g_0+g^0}{2}}$. This is in contradiction with $\overline V(x_0;g_0)\leq h_0$.
The assertion concerning $\overline{V}$ is quite similar and its proof is omitted.
\item Let us fix $u^*$ as in the statement. That $u^*$ is admissible for Problem $\underline {\cal P}(x_0;h_0)$ is clear.  Indeed, by optimality of $u^*$, the area constraint is saturated i.e.  $\int_0^\infty g\pr{x^{x_0,u^*}(t),u^*(t)}dt=\underline{V}(x_0;\underline h_0)=g_0$ and, as a consequence (by Proposition \ref{PropMonotonyRC} and point 1.), one gets \[\overline{V}\pr{x_0;g_0}\leq \underline h_0.\]
Let us assume that there exists a control $\tilde{u}$ such that $\tilde{h}_0:=\overline{J}\pr{x_0,\tilde{u},g_0}<\underline h_0$.  Then $\overline{V}(x_0;g_0)\leq \tilde{h}_0$ and, thus, $\underline{V}(x_0;\tilde{h}_0)\leq g_0= \underline{V}\pr{x_0;h_0}$. This inequality is established due to the first assertion combined with the right-continuity of $\underline{V}\pr{x_0;\cdot}$ (cf. Proposition \ref{PropMonotonyRC}). By monotonicity, this can only happen when $\underline{V}(x_0;\tilde{h}_0)=\underline{V}\pr{x_0;h_{0}}$ which contradicts the choice of $h_0$.
\item When the optimal control $u^\star$ is unique, one has $\underline V(x_0;h_0)=\underline V(x_0;\inf_{t\geq 0}h\pr{x^{x_0,u^*}(t)})=g_0$ and $$\underline h_0=\inf_{t\geq 0}h\pr{x^{x_0,u^*}(t)}.$$ Then, $u^\star$ is optimal for the Problem $\underline {\cal P}(x_0;\underline h_0)$, and therefore also optimal for Problem $\overline {\cal P}(x_0;g_0)$ with $g_0=\underline V(x_0;\underline h_0)$.
\end{enumerate}
The proofs of points 5. and 6. are analogous and are omitted.
\end{proof}

\bigskip

Another remark concerns the equivalent way of writing the statements only through the viability kernels introduced in \eqref{ViabKer}.
\begin{remark}
The second assertion in Theorem \ref{ThMain} can, alternatively, be written as follows. Let $x_0\in\Omega$ be such that $\underline{V}(x_0,\cdot)$, respectively $\overline{V}(x_0,\cdot)$, is lower semi-continuous on
\[
\bigcup_{\bar{h}\in\mathbb{R}} Dom \pr{Viab_{hg}\pr{\bar{h},\cdot}^{-1}\pr{x_0}}, \mbox{ respectively } \bigcup_{\bar{g}\in\mathbb{R}_+} Dom \pr{Viab_{hg}\pr{\cdot,\bar{g}}^{-1}\pr{x_0}}
\] 
Then, one has the equivalence
\begin{equation*}
    \inf Viab_{hg}^{-g}(x_0,\bar{h}) \leq \bar g \; \Longleftrightarrow \; \inf Viab_{hg}^{-h}(x_0,\bar g) \leq \bar h.
\end{equation*}
Indeed, if $\bar h$ and $\bar g$ are such that $\inf Viab_{hg}^{-g}(x_0,\bar{h}) \leq \bar g$, then one has $\underline V(x_0,\bar h) \leq \bar g$ from the first equality in \eqref{corresV_viab} and 
one gets $\inf \{ g \; : \; \overline V(x_0,g) \leq \bar h \} \leq \bar g$
with the first equality in \eqref{GenInv}, which implies $\inf Viab_{hg}^{-h}(x_0,\bar g) \leq \bar h$. The reverse implication is obtained similarly using the second equalities in \eqref{corresV_viab} and \eqref{GenInv}.

It is our belief that the duality is more transparent in the initial formulation, while viability kernel formulations seem to hint to a hidden game-like behavior. In this direction, we refer the readers to \cite{B_2015}.
\end{remark}

\medskip

Finally, we obtain  as a consequence of Theorem \ref{ThMain} the following remarkable property of functions
$\overline{V}\pr{x_0;\cdot}$, $\underline{V}\pr{x_0;\cdot}$.

\begin{lemma}
Whenever $\overline{V}\pr{x_0;\cdot}$ and $\underline{V}\pr{x_0;\cdot}$ are lower semi-continuous, one has
\begin{equation}\label{constance}
\begin{cases}\overline{V}\pr{x_0;\cdot} \textnormal{ is constant on }\pp{\underline{V}\pr{x_0;\overline{V}\pr{x_0;g_0}},g_0}, \ \forall g_0 \in \mathbb{R}_+;\\ 
\underline{V}\pr{x_0;\cdot} \textnormal{ is constant on }\pp{\overline{V}\pr{x_0;\underline{V}\pr{x_0;h_0}},h_0},\ \forall h_0 \in\mathbb{R}.
\end{cases}
\end{equation}
\end{lemma}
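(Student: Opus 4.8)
The statement is an immediate corollary of the generalized-inverse relations in Theorem~\ref{ThMain}, part~2, together with the monotonicity recorded in Proposition~\ref{PropMonotonyRC}. I would prove only the first line of \eqref{constance}; the second follows by the symmetric argument. Fix $g_0\in\mathbb{R}_+$. If $g_0\notin Dom(\overline V(x_0;\cdot))$ there is nothing to prove (the interval is empty or the statement is vacuous), so assume $\overline V(x_0;g_0)=:h_0<+\infty$, and set $a:=\underline V(x_0;h_0)=\underline V(x_0;\overline V(x_0;g_0))$. The goal is to show $\overline V(x_0;\cdot)\equiv h_0$ on $[a,g_0]$.

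\medskip

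\textbf{Step 1: the endpoints.}
From $h_0=\overline V(x_0;g_0)$ and the first equality in \eqref{GenInv}, applied with $h_0$ in place of the free variable, one gets $a=\underline V(x_0;h_0)=\inf\{g:\ \overline V(x_0;g)\le h_0\}\le g_0$, so the interval $[a,g_0]$ is well defined and nonempty. Moreover $a\in Dom(\overline V(x_0;\cdot))$: indeed $\underline V(x_0;h_0)<+\infty$ means $h_0\in Dom(\underline V(x_0;\cdot))$, and then part~1 of Theorem~\ref{ThMain} (using the right-continuity of $\underline V(x_0;\cdot)$ guaranteed by Proposition~\ref{PropMonotonyRC}) gives, with $g_0$ there taken to be $a$, that $\overline V(x_0;a)\le h_0$. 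Thus $\overline V(x_0;a)\le h_0=\overline V(x_0;g_0)$, and since $a\le g_0$, monotonicity (Proposition~\ref{PropMonotonyRC}, part~1) forces $\overline V(x_0;a)=\overline V(x_0;g_0)=h_0$.

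\medskip

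\textbf{Step 2: the interior.}
For any $g\in[a,g_0]$, monotonicity gives $\overline V(x_0;g_0)\le \overline V(x_0;g)\le \overline V(x_0;a)$, and both ends equal $h_0$ by Step~1, whence $\overline V(x_0;g)=h_0$. This proves $\overline V(x_0;\cdot)$ is constant equal to $h_0$ on $[a,g_0]$, which is exactly the first line of \eqref{constance}. The second line is obtained by interchanging the roles of $\overline V$ and $\underline V$, using the second equality in \eqref{GenInv}, the second clause of part~1 of Theorem~\ref{ThMain}, and the right-continuity of $\overline V(x_0;\cdot)$.

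\medskip

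\textbf{Main obstacle.}
There is no real obstacle once Theorem~\ref{ThMain} is in hand; the only point requiring care is the bookkeeping at the left endpoint $a$ --- specifically checking that $a$ lies in the relevant domain so that part~1 of Theorem~\ref{ThMain} may be invoked, and that one is allowed to plug $a$ into the ``$g_0$'' slot there. This is handled by observing $h_0\in Dom(\underline V(x_0;\cdot))$ as above. One should also note that when $\underline V(x_0;h_0)=+\infty$, i.e. $h_0\notin Dom(\underline V(x_0;\cdot))$, the interval in \eqref{constance} degenerates (its left endpoint is $+\infty$), so the claim is to be read as vacuous; this is consistent with the convention $\overline V(x_0;\cdot)=+\infty$ off its domain.
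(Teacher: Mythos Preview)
Your argument is correct and follows essentially the same route as the paper: both proofs combine the inequalities $\overline V(x_0;\underline V(x_0;h_0))\le h_0$ and $\underline V(x_0;\overline V(x_0;g_0))\le g_0$ (obtained from Theorem~\ref{ThMain}, part~1, together with right-continuity) with the monotonicity of Proposition~\ref{PropMonotonyRC} to pin down the endpoint values and then squeeze. The paper phrases the intermediate step as the triple-composition identity $\underline V\circ\overline V\circ\underline V=\underline V$ (and its dual), from which constancy on the stated intervals is read off; your Step~1/Step~2 is the same computation unpacked.

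One small slip to correct: in Step~1, when you invoke part~1 of Theorem~\ref{ThMain} to obtain $\overline V(x_0;a)\le h_0$ from $\underline V(x_0;h_0)=a$, the hypothesis you need is right-continuity of $\overline V(x_0;\cdot)$ at $a$ (first clause of part~1), not of $\underline V(x_0;\cdot)$ as you wrote. Under the lemma's standing assumption both functions are lower semi-continuous, hence right-continuous on their domains, so the conclusion is unaffected; just fix the citation.
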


\begin{proof}
From Proposition \ref{PropMonotonyRC}, $\overline{V}\pr{x_0;\cdot}$ and $\underline{V}\pr{x_0;\cdot}$ are everywhere right-continuous, and one gets $\overline{V}\pr{x_0;\underline{V}\pr{x_0;h_0}}\leq h_0$ and $\underline{V}\pr{x_0;\overline{V}\pr{x_0;g_0}}\leq g_0$ for any $h_0 \in \mathbb{R}$, $g_0 \in \mathbb{R}_+$.

Take $\tilde h_0:=\overline V(x_0;\underline V(x_0,h_0))$. One has then $h_0 \geq \tilde h_0$ and by monotonicity of $\underline V(x_0;\cdot)$, one gets
\[
\underline V(x_0;h_0) \leq \underline V(x_0;\tilde h_0) = \underline V(x_0;\overline V(x_0,\underline V(x_0;h_0))).
\]
On another hand, take $\tilde g_0:= \underline V(x_0;h_0)$. One has then $\underline V(x_0;\overline V(x_0;\tilde g_0)) \leq \tilde g_0$ that is
\[
\underline V(x_0;\overline V(x_0,\underline V(x_0;h_0))) \leq \underline V(x_0;h_0).
\]
One then concludes that 
\[
\underline{V}\pr{x_0;\overline{V}\pr{x_0;\underline{V}(x_0;\cdot)}}=\underline{V}(x_0;\cdot),
\]
and, in a similar way,
\[
\overline{V}\pr{x_0;\underline{V}\pr{x_0;\overline{V}(x_0;\cdot)}}=\overline{V}(x_0;\cdot).
\]
As a consequence, $\overline{V}\pr{x_0;\cdot}$, respectively $\underline{V}\pr{x_0;\cdot}$ are constant on $\pp{\underline{V}\pr{x_0;\overline{V}\pr{x_0;g_0}},g_0}$, respectively $\pp{\overline{V}\pr{x_0;\underline{V}\pr{x_0;h_0}},h_0}$.
\end{proof}

\subsection{Criteria for lower semicontinuity}\label{SubsectionSemicont}

As we have seen in the proof of Theorem \ref{ThMain} and also in Proposition \ref{PropMonotonyRC}, the
lower semi-continuity of the value functions is a crucial ingredient to obtain a duality. As a consequence, it is worthwhile to specify assumptions on the data of the problem that ensure this property.

\medskip

For this purpose, we shall consider the family of optimal control problems with discounted cost, for a discount factor $q>0$.
\begin{problem} Given $x_0 \in \Omega$ and $h_{0}\in\mathbb{R}$, 
\begin{eqnarray*}
\underline {\cal P}_q(x_0;h_{0}): &\textnormal{minimize }& \underline{J}\pr{x_0,u}:=\int_0^{+\infty} e^{-qt}g\pr{x^{x_0,u}(t),u(t)}dt\\
& \textnormal{over } &u\in\mathbb{L}^0\pr{\mathbb{R}_+;U},\\
& \textnormal{s.t.  }& h\pr{x^{x_0,u}(t)}\leq h_{0},\ \forall t\geq 0.
\end{eqnarray*}
\end{problem}
for which we denote by $\underline{V}_q\pr{x_0;h_{0}}$ the value function (set to $+\infty$ when the set of controls satisfying the constraint is empty).

\medskip

We shall also require the classical hypotheses in optimal control theory about the extended velocity set for problem $\underline {\cal P}$.
\begin{ass}
\label{ass2}
For any $x \in \Omega$, one has
\[
\bigcup_{u \in U, r\geq 0} \left[\begin{array}{c}
f(x,u)\\
g(x,u)+r
\end{array}\right] \mbox{ is closed and convex}.
\]
\end{ass}

For convenience, let us define, for any subset $L \subset \Omega$ and $(x_0,u) \in \Omega \times \mathbb{L}^0\pr{\mathbb{R}_+;U}$ the hitting time function 
\[
\tau^{x_0,u}_L := \begin{cases}
    +\infty ,& \mbox{if } x^{x_0,u}(t) \notin L , \; \forall t \geq 0,\\
     \inf \{ t ; \; x^{x_0,u}(t) \in L \} ,& \mbox{otherwise}.
\end{cases}
\]

\begin{proposition}
Let $x_0 \in \Omega$ and $h_0 \in \mathbb{R}$ such that $x_0 \in Viab_h(h_0)$.
\begin{enumerate}
\item For any $q>0$, the map
$\underline{V}_q(x_0,\cdot)$ is bounded and lower semi-continuous on $[h_0,+\infty)$.
Moreover, if $\underline{V}(x_0;\cdot)=\underset{q>0}{\sup}\ \underline{V}_q(x_0;\cdot)$, then it is also bounded and lower semi-continuous.
\item If furthermore there exists a forward invariant compact set $L\subset \Omega$ for any control $u \in \mathbb{L}^0\pr{\mathbb{R}_+;U}$ and a number $\varepsilon>0$ such that
\begin{equation}\label{ReachingTimes}
\begin{cases}
\displaystyle \min_{u\in U}g(y,u)=0,\ \forall y\in L,\\
\displaystyle T^\star := \underset{\bar{h}\in \left[h_0,h_0+\varepsilon\right)}{\sup}\ \underset{u\in \mathbb{L}^0\pr{\mathbb{R};U}}{\sup}\ \underset{x^{x_0,u} \in Viab(\bar h)}{\sup} \tau^{x_0,u}_L < +\infty,
\end{cases}
\end{equation}
then $\underline{V}\pr{x_0;\cdot}$ is bounded and lower semi-continuous on $\left[h_0,h_0+\varepsilon\right)$.
\end{enumerate}
Similar assertions hold true for $\overline{V}\pr{x_0;\cdot}$.
\end{proposition}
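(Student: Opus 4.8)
\noindent\emph{Proof sketch.}
The plan is to prove everything for $\underline{V}$, the statements for $\overline{V}$ following along the same lines (the only new ingredient being that the supremum cost $\sup_{t\ge0}h(\cdot)$ is lower semi-continuous along trajectories converging uniformly on compact time intervals, which is all the closure argument needs). First I would isolate a compactness lemma. Fix $x_0\in\Omega$ and $T>0$: by Assumption \ref{ass1}(2) and Gronwall's inequality every solution $x^{x_0,u}(\cdot)$ stays, on $[0,T]$, in a compact ball depending only on $x_0$, $[f]_1$ and $T$, on which $f$ is bounded, so the family $\{x^{x_0,u}\}_u$ is equi-Lipschitz on $[0,T]$; since $0\le g\le g_\infty$, the accumulated costs are equi-Lipschitz as well. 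A diagonal Arzel\`a--Ascoli extraction then turns any sequence $(u_n)$ into a subsequence along which $x^{x_0,u_n}\to x^*$ and the (discounted or undiscounted) accumulated costs converge, uniformly on compact subsets of $[0,+\infty)$; Assumption \ref{ass2} (the extended velocity set is closed and convex, and remains so after multiplying the cost component by $e^{-qt}>0$) together with the Filippov--Cesari closure theorem then produces an admissible control $u^*$ with $x^*=x^{x_0,u^*}$ and $\underline J(x_0,u^*)\le\liminf_n\underline J(x_0,u_n)$ (respectively $\underline J_q(x_0,u^*)\le\liminf_n\underline J_q(x_0,u_n)$), while $h(x^*(t))=\lim_n h(x^{x_0,u_n}(t))$ for every $t\ge0$.

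For item 1, boundedness of $\underline V_q(x_0;\cdot)$ on $[h_0,+\infty)$ is immediate: $\underline V_q\ge0$, and $x_0\in Viab_h(h_0)\subseteq Viab_h(h)$ makes the constraint set nonempty, whence $\underline V_q(x_0;h)\le g_\infty/q$. For lower semi-continuity I would take a sequence $h_n\to\bar h$ in $[h_0,+\infty)$ realizing $\ell:=\liminf_n\underline V_q(x_0;h_n)$, pick $u_n$ admissible at level $h_n$ with discounted cost at most $\underline V_q(x_0;h_n)+1/n$, and apply the compactness lemma: the limit $u^*$ is admissible at level $\bar h$ (because $h(x^*(t))\le\lim_n h_n=\bar h$ for all $t$) and has discounted cost at most $\ell$, so $\underline V_q(x_0;\bar h)\le\ell$. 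The remaining assertion is then free, since a pointwise supremum of lower semi-continuous functions is lower semi-continuous, the supremum being bounded wherever it is finite by monotonicity (Proposition \ref{PropMonotonyRC}).

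For item 2, the idea is to reduce to item 1 by showing that, under the extra hypotheses, $\underline V(x_0;\cdot)$ is finite --- indeed $\le g_\infty T^\star$ --- and equal to $\sup_{q>0}\underline V_q(x_0;\cdot)$ on $[h_0,h_0+\varepsilon)$. Fix $\bar h$ there and $q,\delta>0$, and take $u$ admissible for $\underline{\cal P}_q(x_0;\bar h)$ with discounted cost $\le\underline V_q(x_0;\bar h)+\delta$; since $x^{x_0,u}(\cdot)$ respects the state constraint $h\le\bar h$ with $\bar h\in[h_0,h_0+\varepsilon)$, \eqref{ReachingTimes} yields $\tau:=\tau^{x_0,u}_L\le T^\star$ with $x^{x_0,u}(\tau)\in L$ ($L$ being compact), and forward invariance keeps the trajectory in $L$ afterwards. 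By Assumption \ref{ass2} the set $\{f(y,v):v\in U,\ g(y,v)=0\}$ --- the projection of a face of the extended velocity set, nonempty on $L$ because $\min_{v\in U}g(y,v)=0$ there --- is compact, convex, and upper-semicontinuous in $y\in L$, so the differential inclusion $\dot x\in\{f(x,v):v\in U,\ g(x,v)=0\}$ admits a solution issued from $x^{x_0,u}(\tau)$; that is, there is a control $\bar u$ along which the trajectory remains in $L$ and $g$ vanishes identically. Provided $L$ lies in $\{h\le h_0\}$ --- as is the case in the applications, and as should be read into the hypotheses on $L$ --- the concatenation equal to $u$ on $[0,\tau]$ and to $\bar u$ afterwards is admissible for $\underline{\cal P}(x_0;\bar h)$, with cost $\int_0^\tau g\le g_\infty T^\star$; this gives finiteness, and the refined bound $\int_0^\tau g\le e^{q\tau}\int_0^\tau e^{-qt}g\le e^{qT^\star}(\underline V_q(x_0;\bar h)+\delta)$, on letting $\delta\to0$ and then $q\to0$, gives $\underline V(x_0;\bar h)\le\sup_{q>0}\underline V_q(x_0;\bar h)$. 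The reverse inequality holds trivially since $e^{-qt}\le1$, so item 1 applies and finishes the proof.

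The step I expect to be the main obstacle is the closure/compactness argument: passing to the limit simultaneously in the state equation and in the running cost genuinely requires the convexity of Assumption \ref{ass2} and a measurable-selection step, and the infinite time horizon forces one to argue on each $[0,T]$ and let $T\to+\infty$, which is legitimate only because $g\ge0$. The second delicate point, in item 2, is the zero-cost continuation inside $L$: its existence rests on the face structure furnished by Assumption \ref{ass2} together with forward invariance, while its admissibility for the state constraint hinges on $L$ being contained in $\{h\le h_0\}$ --- which is the structural content one must extract from the hypotheses in \eqref{ReachingTimes}.
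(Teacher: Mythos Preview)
Your proposal is correct and follows the same strategy as the paper: establish lower semi-continuity of each $\underline V_q(x_0;\cdot)$ via compactness of trajectories together with the convexity hypothesis of Assumption~\ref{ass2}, observe that a pointwise supremum of l.s.c.\ functions is l.s.c., and for item~2 show $\underline V=\sup_{q>0}\underline V_q$ on $[h_0,h_0+\varepsilon)$ by reducing to the finite horizon $[0,T^\star]$ via a zero-cost continuation inside $L$. The implementations differ only in secondary details---the paper couples the state with the forward discounted value $v_n(t)=\int_0^\infty e^{-qs}g(\cdot+t)\,ds$, passes to the limit in the resulting differential inclusion, and uses an $L^p$-norm argument to carry the constraint $h\le h_0$ to the limit, whereas you run a direct Filippov--Cesari closure on $(\dot x,\dot z)$---and you rightly isolate the point, left tacit in the paper, that the zero-cost continuation inside $L$ must itself satisfy the state constraint.
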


\begin{proof}

Let us fix $x_0 \in \Omega$ and, for the time being, $q>0$.  For any $h_0$ such that $x_0 \in Viab_h(h_0)$, $\underline{V}_q(x_0;\cdot)$ is well defined and bounded on $[h_0,\infty)$. Moreover, 
$\underline{V}_q(x_0;\cdot)$ is non-increasing on $[h_0,\infty)$. As such, the lower semi-continuity of $\underline{V}_q(x_0;\cdot)$ at $h_0$ only needs to be shown on decreasing sequences $h_n\to h_0$ ($n\geq 1$).
Posit
\[
\underline{v}:=\underset{n\rightarrow\infty}{\lim\inf} \ \underline{V}_q\pr{x_0;h_n} < + \infty,
\]
and consider, for every $n\geq 1$, an admissible control $u_n$ such that 
\begin{align*}
& \int_0^{+\infty} e^{-qt}g\pr{x^{x_0,u_n}(t),u_n(t)}dt\leq \underline{V}_q(x_0;h_n)+\frac{1}{n};\\
& \qquad \mbox{with } \sup_{t\geq 0}h\pr{x^{x_0,u_n}(t)}\leq h_n.
\end{align*}
We then define the sequence of functions
\begin{equation*}
    v_n(t):=\int_0^{+\infty} e^{-qs}g\pr{x^{x_0,u_n}(s+t),u_n(s+t)}ds,\ t\geq 0.
\end{equation*}
Note that $v_n(\cdot)$ is the unique bounded solution of the equation
\[
\dot v_n(t)= qv_n(t)-g\pr{x^{x_0,u_n}(t),u_n(t)}, \; t \geq 0.
\]
Let us also define the set-valued map
\[
F(x,v):=\bigcup_{u\in U, \alpha \in [0,1]} \left[\begin{array}{c}
f(x,u)\\
qv-\alpha g(x,u) -(1-\alpha) g_\infty
\end{array}\right] , \; (x,v) \in \Omega \times \mathbb{R},
\]
which is Lipschitz continuous with compact convex values (from Assumptions \ref{ass1}, \ref{ass2}). Clearly, $(x^{x_0,u_n}(\cdot),v_n(\cdot))$ is solution of the differential inclusion
$(\dot x,\dot v) \in F(x,v)$.  Passing to the limit (along some subsequence), for every compact time interval $\pp{0,T}$,  $\pr{x^{x_0,u_n},v_n}$ converges uniformly to some solution $(x,v)$ of $\pr{\dot x,\dot v}\in F(x,v)$ with $x(0)=x_0$ and $v(0)=\underline v$ (as a consequence of the Theorem of compactness of solutions of differential inclusions, see e.g. \cite{Clarke}).  Furthermore, $v$ is bounded since $v_n$ are uniformly bounded by $\frac{1}{q}\norm{g}_\infty$. The procedure can be repeated to obtain a solution $ (x,v)$ defined for any $ t \in \mathbb{R}_+$. Furthermore, from Filippov selection Lemma, there exist admissible controls $(u(\cdot),\alpha(\cdot))$ such that 
\begin{equation*}\label{Estim_1}
 x(t)=x^{x_0,u}(t),\;
\dot v(t)=qv(t)-\alpha(t)g(x(t),u(t))-(1-\alpha(t))g_\infty, \; \mbox{ a.e. } t \geq 0.
\end{equation*}
Note that $v$ is a bounded solution of
\begin{equation}
    \label{vdot}
\dot v(t)=qv(t) - g(x^{x_0,u}(t),u(t)) -r(t), \quad t \geq 0,
\end{equation}
where $r$ is the bounded non-negative function
\[
r(t):=(1-\alpha(t))(g_\infty -g(x^{x_0,u}(t),u(t)), \quad t \geq 0,
\]
and that the unique bounded solution of \eqref{vdot} is given by the expression
\begin{equation}
    \label{solv}
v(t)=\int_0^{+\infty} e^{-qs} g(x^{x_0,u}(s+t),u(s+t))ds + \int_0^{+\infty} e^{-qs} r(s+t)ds.
\end{equation}

Moreover, for any $T \in (0,+\infty)$ and $p\geq 2$, the convergence of solutions $x^{x_0,u_n}$ and the continuity and boundedness of $h$ yields
\[
\underset{n\rightarrow\infty}{\lim\inf}\ \int_0^Th^p\pr{x^{x_0,u_n}(t)}\ dt\geq \int_0^Th^p\pr{x^{x_0,u}(t)}\ dt,
\]
from which one deduces
\begin{align*}
\sup_{t \in [0,T]}\ h\pr{x^{x_0,u}(t)}&= \sup_{p\geq 2}\norm{h\pr{x^{x_0,u}}}_{\mathbb{L}^p\pr{\pp{0,T};\mathbb{R}}}\\
& \leq \sup_{p\geq 2}\underset{n\rightarrow\infty}{\lim\inf}\ \norm{h\pr{x^{x_0,u_n}}}_{\mathbb{L}^p\pr{\pp{0,T};\mathbb{R}}}=\sup_{p\geq 2}\sup_{n\geq 1}\inf_{m\geq n}\ \norm{h\pr{x^{x_0,u_m}}}_{\mathbb{L}^p\pr{\pp{0,T};\mathbb{R}}}\\&\leq \sup_{n\geq 1}\inf_{m\geq n}\ \sup_{p\geq 2}\norm{h\pr{x^{x_0,u_m}}}_{\mathbb{L}^p\pr{\pp{0,T};\mathbb{R}}}=\underset{n\rightarrow\infty}{\lim\inf}\ \norm{h\pr{x^{x_0,u_n}}}_{\mathbb{L}^\infty\pr{\pp{0,T};\mathbb{R}}}\\
&\leq \underset{n\rightarrow\infty}{\lim\inf}\ \sup_{t \geq 0}\ h\pr{x^{x_0,u_n}(t)} \leq h_0,
\end{align*}
and as this last inequality is valid for any $T>0$, one deduces the inequality
\begin{equation}
    \label{h0}
\sup_{t \geq 0}\ h\pr{x^{x_0,u}(t)} \leq h_0.
\end{equation}
Finally, from \eqref{solv} and \eqref{h0} one obtains
\[
\underline v=v(0) = \int_0^{+\infty} e^{-qs} g(x^{x_0,u}(s),u(s))ds + \int_0^{+\infty} e^{-qs} r(s)ds \geq \underline V_q(x_0;h_0),
\]
that is
\[
\underset{n\rightarrow\infty}{\lim\inf} \ \underline{V}_q\pr{x_0;h_n} \geq \underline V_q(x_0;h_0),
\]
which proves the lower semi-continuity and boundedness of $\underline V_q(x_0;\cdot)$ at $h_0$. As the upper envelope
$\underset{q>0}{\sup} \underline{V}_q(x_0;\cdot)$ is lower semi-continuous, we deduce that when the value function $\underline{V}(x_0;\cdot)$ verifies $\underline{V}(x_0;\cdot)=\sup_{q>0} \underline{V}_q(x_0;\cdot)$, then it is also 
lower semi-continuous (and bounded as $g$ is bounded).

\medskip

Under assumption \eqref{ReachingTimes}, one has clearly
\[
\underline{V}_q(x_0;\bar h)=\inf_{\overset{u \in \mathbb{L}^0(\mathbb{R}_+;U)}{\sup h(x^{x_0,u}(\cdot))\leq \bar h}} \ \int_0^{T^\star} e^{-qt}g(x^{x_0,u}(t),u(t))dt,
\]
and, thus,
\[
\underline{V}(x_0;\bar h)=\inf_{\overset{u \in \mathbb{L}^0(\mathbb{R}_+;U)}{\sup h(x^{x_0,u}(\cdot))\leq \bar h}} \ \int_0^{T^\star} g(x^{x_0,u}(t),u(t))dt=\sup_{q>0} \underline{V}_q(x_0;\bar r), \; h \in [h_0,h_0+\varepsilon).
\]
\end{proof}

\section{Illustration on an epidemiological model}\label{SecAppl}
We recall the classical epidemiological SIR model with a non-pharmaceutical control.
\begin{equation}
\label{SIR}
\begin{cases}
\dot s(t)=-\beta (1-u(t))s(t)i(t)dt,\\
\dot i(t)=\beta(1-u(t))s(t)i(t)-\gamma i(t),\\
\dot r(t)=\gamma i(t),
\end{cases}
\end{equation}
where $s(t)$, $i(t)$ and $r(t)$ stand for the densities of the susceptible, infected and recovered populations, respectively. One can check that the property $s(t)+i(t)+r(t)=1$ is satisfied for any $t\geq 0$. The control variable $u(t)$ takes values in $U=\pp{0,\overline{u}}$ with $\overline u \leq 1$.
To keep it simple, we take 
here $n=2$ with state variable $(s,i) \in \Omega$ where
\[
\Omega:= \left\{ (s,i) \in \mathbb{R}^2; \; s>0, \; i>0, \; s+i \leq 1\right\} ,
\]
and consider
\[
h(s,i):=i , \quad g(s,i,u)=\lambda(s,i)u
\]
where $\lambda$ is a smooth function.
For coherence, 
instead of writing $h_0$, we will write $i^*\in\pp{0,1}$.\\ 

With respect to this system and the aforementioned functionals $g$ and $h$, the papers \cite{AFG_2022} and \cite{MR_2022} offer different treatments to Problem \ref{Prob1} and Problem \ref{Prob2} respectively for the particular case when $\lambda=1$. Based on classical Pontryagin's Maximum Principle arguments, the paper \cite{AFG_2022} shows in the main result \cite[Theorem 5.6]{AFG_2022} that the unique optimal control in Problem \ref{Prob1} is the "greedy" one only acting on the boundary on the feasible region. The same type of control is shown to be optimal for Problem \ref{Prob2} in \cite[Proposition 2]{MR_2022} using alternative (Green's Theorem-based) methods (see for instance Figure \ref{Fig0} for an illustration of an optimal solution in coordinates $(s,i,z)$ with the corresponding optimal control).
\begin{figure}[!ht]
\centerline{\includegraphics[width=0.7\columnwidth]{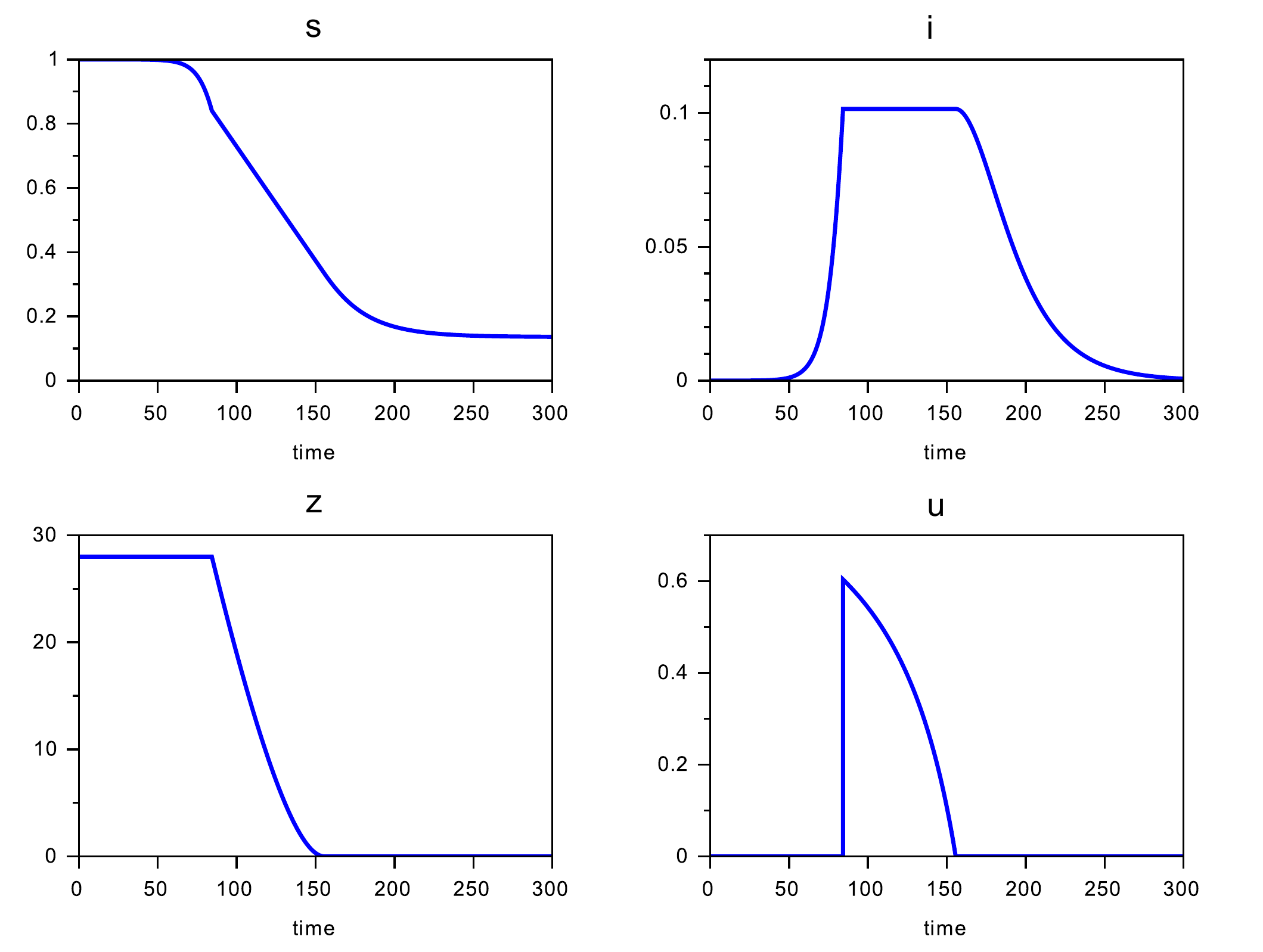}}
\caption{Example of an optimal solution for $\beta=0.21$, $\gamma=0.07$ with $i^*=0.0115$ and $g_0=28$ when $\lambda=1$ (from \cite{MR_2022}).}
\label{Fig0}
\end{figure}

We shall see in Section \ref{sec_lambda} how to generalize these results to more general functions $\lambda$.

\subsection{The geometrical structure of the domain of the value function of Problem \ref{Prob1}}
We assume that $\bar u$ is such that
\begin{equation}
\label{cond_ubar}
\bar u < 1-\frac{\gamma}{\beta}
\end{equation}
\begin{remark}
    For Problem 2, it has been shown in \cite{MR_2022} that the "null-singular-null" (NSN) strategy is such that $\max_t u(t) < 1 - \frac{\gamma}{\beta}$. This implies that this strategy is admissible when $\bar u$ verifies condition \eqref{cond_ubar}. Moreover, under this condition, the NSN strategy coincides with the greedy strategy defined in \cite{AFG_2022}, that we recall below and for which we show the optimality in Section \ref{sec_lambda}. This justifies the hypothesis \eqref{cond_ubar}.
\end{remark}

Let $i^*\in \pp{0,1}$ be fixed. Then, according to \cite[Theorem 2.3]{AFG_2022}, and provided that $i^*+\frac{\gamma}{\beta(1-\overline u)}\leq 1$ if fulfilled, one has
\begin{equation}
(s_0,i_0)\in Viab_h\pr{i^*} 
\;\Leftrightarrow\;
\begin{cases}
s_0\leq \frac{\gamma}{\beta (1-\overline{u})}\textnormal{ and }i_0\leq i^*,\\
\textnormal{ or}\\ 
s_0> \frac{\gamma}{\beta (1-\overline{u})}\textnormal{ and }i_0\leq \frac{\gamma}{\beta (1-\overline{u})}\pr{1+\log\pr{\frac{\beta (1-\overline{u})s_0}{\gamma}}}-s_0+i^*.
\end{cases}
\end{equation}
This later condition yields, in an equivalent form
\begin{equation}
\label{Kh_cond}
Dom\pr{\underline{V}(s_0,i_0);\cdot)}=\begin{cases}\left[i_0,\infty\right),\textnormal{ if }s_0\leq \frac{\gamma}{\beta (1-\overline{u})};\\
\left[i_0+s_0-\frac{\gamma}{\beta (1-\overline{u})}\pp{1+\log\pr{\frac{\beta (1-\overline{u})s_0}{\gamma}}},\infty\right),\textnormal{ otherwise}. \end{cases}
\end{equation}
Note that this can be written in a unitary form by replacing, in the later term $s_0$ with the expression $\max\left(s_0,\frac{\gamma}{\beta (1-\overline{u})}\right)$.

For further developments, we also introduce the invariance kernel associated to $i^*$
\[
Inv_h(i^\star):=\set{(s_0,i_0)\in\Omega :\ \forall u\in \mathbb{L}^0\pr{\mathbb{R}_+;U}; \; i^{(s_0,i_0),u}(t)\leq i^\star,\; \forall t\geq 0 }
\]
and similar to $Viab_h\pr{i^*}$ (by formally taking $\overline{u}=0$), one has
\begin{equation}\label{Limax}
\pr{s_0,i_0}\in Inv_h\pr{i^*} 
\;\Leftrightarrow\;
\begin{cases}
s_0\leq \frac{\gamma}{\beta}\textnormal{ and }i_0\leq i^*,\\
\textnormal{ or}\\ 
s_0> \frac{\gamma}{\beta}\textnormal{ and }i_0\leq \frac{\gamma}{\beta}\pp{1+\log\pr{\frac{\beta s_0}{\gamma}}}-s_0+i^*.
\end{cases}
\end{equation}
Concerning the main assumptions, the reader will note that we deal with a control-affine structure here such that 
\begin{enumerate}
\item the sets $Viab_h\pr{i^*}\subset Viab_h(1)$ are compact;
\item the Assumption \ref{ass2} (convexity of the extended velocity set) is always satisfied.
\end{enumerate}
On Figure \ref{Fig1},
the (boundary of the) set $Viab_h(i^*)$ is represented by the graph of a function $\psi$ depicted in yellow, while the set $Inv_h(i^*)$ has a boundary represented in green as the graph of a function $\phi$. The intermediate set $B(i^*)$ (defined below in \eqref{B}(a)) has a blue boundary ($\partial B$), in complement of the upper barrier $i=i^*$.  Furthermore, the DFE (desease-free equilibria) for $u=0$ (resp. $u=\overline{u}$) are represented on the upper-part of the graphic.
\begin{figure}[!ht]
\centerline{\includegraphics[width=0.7\columnwidth]{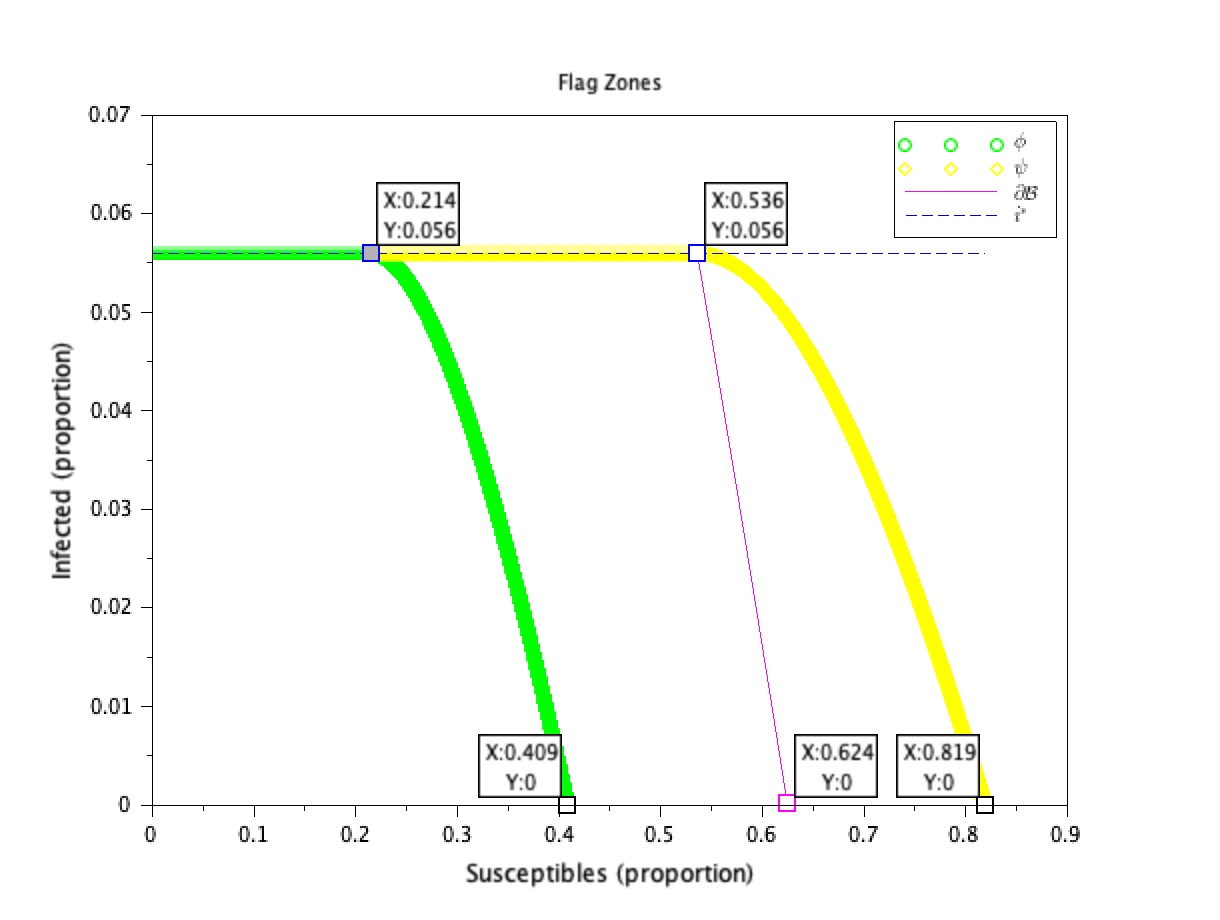}}
\caption{Geometric zones for $\beta=\frac{1}{3}$,  $1-\overline u=0.4$, $\gamma=\frac{1}{14}$, $i^*=14\times\frac{400}{100,000}$ (from \cite{AFG_2022}).}
\label{Fig1}
\end{figure}
\subsection{Regularity of the optimal cost}
Take $i^*\in Dom\pr{\underline{V}(\AL{(}s_0,i_0\AL{)};\cdot)}$ such that $i^*+\frac{\gamma}{\beta(1-\overline u)}<1$, and define the greedy feedback policy \begin{equation}
\label{OptCtrlP1}
u^*(s,i):=\begin{cases}
\overline{u},&\textnormal{ if }s>\frac{\gamma}{\beta (1-\overline{u})},\ i=i^*-s+\frac{\gamma}{\beta (1-\overline{u})}\pp{1+\log\pr{\frac{\beta (1-\overline{u})s}{\gamma}}};\\
1-\frac{\gamma}{\beta s}, &\textnormal{ if }s\in\pp{\frac{\gamma}{\beta},\frac{\gamma}{\beta(1-\overline{u})}} \textnormal{ and }i=i^*;\\
0,&\textnormal{ otherwise},
\end{cases}
\end{equation} in which non-zero action is taken only when the trajectory reaches $\partial Viab_h\pr{i^*}$.  The associated cost satisfies (see \cite[Lemma 1]{FGLX_2022})
\begin{equation}
\label{OptimValP1}
\underline{J}\pr{s_0,i_0;i^*}=\begin{cases}
0,\textnormal{ if }\pr{s_0,i_0}\in Inv_h\pr{i^*};\\[3mm]
\displaystyle  \frac{1}{\gamma i^*}\int_{\frac{\gamma}{\beta}}^{s_1(s_0,i_0;i^*)}\lambda(l,i^*)\pr{1-\frac{\gamma}{\beta l}}\,dl, \textnormal{ if } \pr{s_0,i_0}\in B\pr{i^*}\setminus Inv_h\pr{i^*};\\[4mm]
\displaystyle \frac{1}{\beta (1-\overline{u})}\int_{\frac{\gamma}{\beta (1-\overline{u})}}^{s_2\pr{s_0,i_0;i^*}}\frac{\lambda\pr{s,\pr{\theta(i^*)-s+\frac{\gamma}{\beta(1-\overline{u})}\log s}}\overline{u}}{s\pr{\theta(i^*)-s+\frac{\gamma}{\beta(1-\overline{u})}\log s}}\,ds+\\[6mm]
\hfill
\underline{J}\pr{\frac{\gamma}{\beta(1-\overline{u})},i^*;i^*}, \textnormal{ otherwise},
\end{cases}
\end{equation}
where 
\begin{equation}\label{B}
\left\{\;\begin{array}{ll}
(a): & B(i^*):=\set{(s,i)\in Viab_h(i^*):\ s+i\leq i^*+\frac{\gamma}{\beta(1-\overline{u})}+\frac{\gamma}{\beta}\log\pr{\frac{\beta(1-\overline{u})s}{\gamma}}};\\
(b): & s_1(s_0,i_0;i^*)>\frac{\gamma}{\beta}\textnormal{ is the solution of } s_1-s_0-i_0+i^*-\frac{\gamma}{\beta}\log\frac{s_1}{s_0}=0;\\
(c): & \theta(i):=i+\frac{\gamma}{\beta(1-\overline{u})}\pr{1-\log\frac{\gamma}{\beta(1-\overline{u})}};\\
(d): & s_2\pr{s_0,i_0;i^*}:=\exp\pr{\frac{\beta (1-\overline{u})}{\gamma\overline{u}}\pr{s_0+i_0-\frac{\gamma}{\beta}\log s_0-\theta(i^*)}}.
\end{array}\right.
\end{equation}
The cost $\underline{J}\pr{\frac{\gamma}{\beta(1-\overline{u})},i^*;i^*}$ used in expression \eqref{OptimValP1} depends on the function $\lambda$ and does not have necessarily an explicit expression, excepted when $\lambda$ is constant as in \cite{AFG_2022,MR_2022}.

\medskip

We claim that the following property is fulfilled.
\begin{lemma}
Fix $\pr{s_0,i_0} \in \Omega$ and $(i^n)_n$, $n \in \mathbb{N}$, a sequence decreasing to $i^*$. Then, one has
\begin{equation}\label{RC}
\lim_{n\rightarrow\AL{+}\infty}\underline{J}\pr{s_0,i_0;i^{n}}=\underline{J}\pr{s_0,i_0;i^*}.
\end{equation}
\end{lemma}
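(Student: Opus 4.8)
The plan is to read the claim off the explicit piecewise expression \eqref{OptimValP1} for $\underline{J}(s_0,i_0;\cdot)$: for the fixed datum $(s_0,i_0)$ and all large $n$, the point $(s_0,i_0)$ sits in the same one of the three regimes relative to $i^n$ as it does relative to $i^*$, and then one passes to the limit inside the active formula.

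First I would record that, because $i^n\geq i^*$, the explicit descriptions \eqref{Kh_cond}, \eqref{Limax} and \eqref{B}(a) yield the inclusions $Inv_h(i^*)\subseteq Inv_h(i^n)$, $B(i^*)\subseteq B(i^n)$ and $Viab_h(i^*)\subseteq Viab_h(i^n)$, each of these sets being closed with a boundary depending real-analytically -- hence continuously, in Hausdorff distance -- on the threshold. Consequently: if $(s_0,i_0)\in Inv_h(i^*)$ then $(s_0,i_0)\in Inv_h(i^n)$ for every $n$ and \eqref{OptimValP1} gives $\underline J(s_0,i_0;i^n)=0=\underline J(s_0,i_0;i^*)$; if $(s_0,i_0)\notin Viab_h(i^*)$ then $(s_0,i_0)\notin Viab_h(i^n)$ for $n$ large, so both sides are $+\infty$ (with the standing convention that an infeasible datum has cost $+\infty$); if $(s_0,i_0)\in Viab_h(i^*)\setminus B(i^*)$, which is relatively open in $Viab_h(i^*)$, then $(s_0,i_0)$ lies at positive distance from the closed set $B(i^*)$, hence, since $B(i^n)$ decreases to $B(i^*)$, from $B(i^n)$ for $n$ large, so the ``otherwise'' regime is active for $i^n$; finally, if $(s_0,i_0)\in B(i^*)\setminus Inv_h(i^*)$, positivity of its distance to the closed set $Inv_h(i^*)$ together with $B(i^*)\subseteq B(i^n)$ put it in $B(i^n)\setminus Inv_h(i^n)$ for $n$ large. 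Thus only the two genuinely non-trivial branches remain, and the same one is active for $i^n$ (with $n$ large) as for $i^*$.

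Then I would pass to the limit branch by branch. On $B(i^*)\setminus Inv_h(i^*)$, the scalar $s_1(s_0,i_0;i)$ solves $\sigma-s_0-i_0+i-\frac{\gamma}{\beta}\log\frac{\sigma}{s_0}=0$; since the $\sigma$-derivative of the left-hand side equals $1-\frac{\gamma}{\beta\sigma}>0$ at the root $\sigma=s_1>\frac{\gamma}{\beta}$, the implicit function theorem makes $i\mapsto s_1(s_0,i_0;i)$ continuous near $i^*$, so $s_1(s_0,i_0;i^n)\to s_1(s_0,i_0;i^*)$; the integrand $l\mapsto\lambda(l,i)(1-\frac{\gamma}{\beta l})$ is continuous and bounded on a compact neighbourhood of $[\frac{\gamma}{\beta},s_1(s_0,i_0;i^*)]\times\{i^*\}$ which contains $[\frac{\gamma}{\beta},s_1(s_0,i_0;i^n)]\times\{i^n\}$ for $n$ large, and $\frac{1}{\gamma i^n}\to\frac{1}{\gamma i^*}$ since $i^*>0$; whence convergence of the first formula. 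On the ``otherwise'' branch, $\theta(\cdot)$ is affine and $s_2(s_0,i_0;\cdot)$ is an exponential of an affine function of the threshold, so the integration limits converge; the denominator $s(\theta(i)-s+\frac{\gamma}{\beta(1-\overline{u})}\log s)$ is, up to the factor $s$, the infected coordinate along the $\overline{u}$-arc joining the entry point to the corner $(\frac{\gamma}{\beta(1-\overline{u})},i^*)$, which stays bounded below by a positive constant uniformly in $n$ on the (compact) integration range, so the integrand converges uniformly and the integral converges; finally the residual term $\underline J(\frac{\gamma}{\beta(1-\overline{u})},i^n;i^n)$ is itself of the first (``$B$'') type -- its defining equation for $s_1$ reduces to $\sigma-\frac{\gamma}{\beta(1-\overline{u})}-\frac{\gamma}{\beta}\log\frac{\beta(1-\overline{u})\sigma}{\gamma}=0$, independent of $i^n$, with root $\sigma=\frac{\gamma}{\beta(1-\overline{u})}$ -- so it converges to $\underline J(\frac{\gamma}{\beta(1-\overline{u})},i^*;i^*)$ by the previous step, and the second formula converges as well.

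The delicate point is the first step: ruling out that $(s_0,i_0)$ migrates to a different regime as $i^n\downarrow i^*$, and in particular avoiding any compatibility check of the three formulas across the interfaces $\partial Inv_h(i^*)$ and $\partial B(i^*)$. Here the one-sidedness $i^n\geq i^*$ is exactly what makes the argument clean -- all the zones only grow, so a point lying in, or on the boundary of, a given zone for the threshold $i^*$ stays in that zone for $i^n$; with a two-sided sequence one would additionally need to verify that consecutive branches of \eqref{OptimValP1} match along those interfaces. Everything else is the routine dominated-convergence/continuity bookkeeping sketched above, once the real-analytic dependence of $s_1$, $s_2$, $\theta$ and of the zone boundaries on the threshold has been recorded.
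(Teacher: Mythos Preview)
Your proof is correct and follows essentially the same approach as the paper's: a case-by-case analysis showing that $(s_0,i_0)$ eventually lies in the same regime for $i^n$ as for $i^*$, followed by passage to the limit in the explicit formula \eqref{OptimValP1}. Your argument is in fact more complete than the paper's rather terse version---you justify the continuity of $s_1$ via the implicit function theorem and handle the residual corner term $\underline J\bigl(\tfrac{\gamma}{\beta(1-\overline u)},i^n;i^n\bigr)$ explicitly, whereas the paper merely asserts that ``the same arguments can be applied''; and for regime stability you use a distance/Hausdorff argument while the paper uses the equivalent observation that $Inv_h(i^*)=\bigcap_n Inv_h(i^n)$ and $B(i^*)=\bigcap_n B(i^n)$.
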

\begin{proof}
\begin{enumerate}
\item The reader will easily note that one has $Inv_h\pr{i^*}=\underset{n \in \mathbb N}{\cap}Inv_h\pr{i^n}$ (decreasing limit).
\item The same assertion holds true by defining $B\pr{i^*}$ given in \eqref{B}(a) as $B\pr{i^*}=\underset{n\in \mathbb N}{\cap}B\pr{i^n}$, where $\pr{B\pr{i^n}}_{n}$ is a non-increasing sequence.
\item If $(s_0,i_0)\in Inv_h\pr{i^*}$, then the equality in \eqref{RC} follows easily from the inclusion $Inv_h\pr{i^*}\subset Inv_h\pr{i^n}$ for every $n \in \mathbb N$ and by recalling that the value function is null at such points.
\item If $(s_0,i_0)\in B\pr{i^*}\setminus Inv_h\pr{i^*}$, then $(s_0,i_0)\in B\pr{i^n}$, for all $n \in \mathbb N$. If there existed a subsequence $\pr{\phi(n)}_{n}$ such that $(s_0,i_0)\in Inv_h\pr{i^{\phi(n)}}$ for any $n \in \mathbb N$, then, we would have $(s_0,i_0)\in Inv_h\pr{i^*}$ which is not the case. \\
It follows that, from some $n_0>0$ large enough and every $n\geq n_0$, one has $(s_0,i_0)\in B\pr{i^n}\setminus Inv_h\pr{i^n}$. One easily see that the function 
\[
i\mapsto s_1\pr{s_0,i_0;i}
\] is right-continuous for $i>0$, and we get equality \eqref{RC} for this framework.
\end{enumerate}
The same arguments can be applied in order to prove \eqref{RC} on $Viab_h\pr{i_{max}}\setminus B\pr{i^*}$ due to the continuity of the functions $\theta$ and $s_2$. 
\end{proof}

\bigskip

Finally, we obtain the following result.
\begin{proposition}
Let $(s_0,i_0) \in \Omega$. Then, the value function $i^*\mapsto\underline{J}\pr{s,i;i^*}$ is right-continuous at every point 
$i^*\in Dom\pr{\underline{V}(s_0,i_0;\cdot)}$ such that 
$i^*<1-\frac {\gamma}{\beta (1-\overline u)}$, where
$Dom\pr{\underline{V}(s_0,i_0;\cdot)}$ is given by \eqref{Kh_cond}.
\end{proposition}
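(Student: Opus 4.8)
The goal is to establish right-continuity of $i^*\mapsto\underline J(s_0,i_0;i^*)$ at points $i^*$ of $Dom(\underline V(s_0,i_0;\cdot))$ satisfying $i^*<1-\frac{\gamma}{\beta(1-\overline u)}$. The strategy is to identify this $\underline J$ with the value function $\underline V(s_0,i_0;\cdot)$ of Problem \ref{Prob1}, and then invoke Proposition \ref{PropMonotonyRC}(2): a lower semi-continuous, non-increasing function is right-continuous on its domain. So the proof breaks into two independent tasks: (i) verify that the greedy feedback $u^*$ of \eqref{OptCtrlP1} is indeed optimal, so that $\underline J(s_0,i_0;i^*)=\underline V(s_0,i_0;i^*)$ for the relevant $i^*$; and (ii) verify the lower semi-continuity of $\underline V(s_0,i_0;\cdot)$ near such $i^*$.

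For task (ii) I would apply the second point of the Proposition on criteria for lower semicontinuity (the one invoking the hitting-time bound \eqref{ReachingTimes}). The candidate compact forward-invariant set is $L:=Inv_h(i^*)$ (or a slightly enlarged version thereof) together with the condition $\min_{u\in U}g(y,u)=\min_{u}\lambda(y,i)u=0$, which holds because $0\in U$ — this is exactly why the control is affine in $u$ with $0$ available. The finiteness of $T^\star$ over $\bar h\in[i^*,i^*+\varepsilon)$ is where the hypothesis $i^*<1-\frac{\gamma}{\beta(1-\overline u)}$ enters: it guarantees that for $\varepsilon$ small the geometry of $Viab_h(\bar h)$ stays inside $\Omega$ away from degenerate corners, so that trajectories remaining in $Viab_h(\bar h)$ reach $Inv_h(\bar h)$ (hence $L$) in uniformly bounded time — the flow pushes $s$ monotonically downward at a rate bounded below on the relevant region. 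Alternatively, and more cheaply, I would note that the preceding Lemma already proves \eqref{RC}, i.e.\ pointwise convergence $\underline J(s_0,i_0;i^n)\to\underline J(s_0,i_0;i^*)$ along decreasing sequences, which combined with monotonicity \emph{is} right-continuity; so the bulk of the Proposition is actually a repackaging of that Lemma plus the identification in task (i).

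The main obstacle is task (i): confirming $\underline J=\underline V$, i.e.\ the optimality of the greedy control. This is not re-proven here — it is imported from \cite[Theorem 5.6]{AFG_2022} in the case $\lambda=1$, and its extension to general smooth $\lambda$ is deferred to Section \ref{sec_lambda}. So in the write-up I would state the identification $\underline J(s_0,i_0;\cdot)=\underline V(s_0,i_0;\cdot)$ as a consequence of that (forthcoming) optimality result, then combine: $\underline V(s_0,i_0;\cdot)$ is non-increasing (Proposition \ref{PropMonotonyRC}(1)) and, by the Lemma just proved together with monotonicity, it satisfies $\liminf_{i\to i^*+}\underline V(s_0,i_0;i)=\underline V(s_0,i_0;i^*)$, hence is right-continuous at every admissible $i^*$ with $i^*<1-\frac{\gamma}{\beta(1-\overline u)}$. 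The restriction on $i^*$ is inherited verbatim from the hypotheses under which \eqref{OptCtrlP1}–\eqref{OptimValP1} and the Lemma were established.

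\medskip

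\noindent\textit{(Proof sketch only; the full argument, including the optimality of $u^*$ for general $\lambda$, is carried out in Section \ref{sec_lambda}.)}
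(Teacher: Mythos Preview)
Your proposal takes a considerably more elaborate route than the paper. In the paper there is no separate proof for this Proposition: it is stated as an immediate consequence of the preceding Lemma, which already establishes $\underline{J}(s_0,i_0;i^n)\to\underline{J}(s_0,i_0;i^*)$ along every decreasing sequence $i^n\downarrow i^*$. That is precisely right-continuity, since any sequence approaching $i^*$ from above contains in every subsequence a further monotone (hence necessarily decreasing) subsequence. Nothing else is needed.

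Your primary strategy --- identify $\underline{J}$ with $\underline{V}$ via optimality of the greedy feedback, then invoke the lower-semi-continuity criterion \eqref{ReachingTimes} and Proposition~\ref{PropMonotonyRC} --- is therefore a detour, and it makes the Proposition \emph{conditional} on the optimality of $u^*$. The paper's statement is about $\underline{J}$ as defined by the explicit formula \eqref{OptimValP1} (the greedy cost), not about $\underline{V}$; the phrase ``the value function'' in the Proposition is loose wording and should not be read as requiring $\underline{J}=\underline{V}$. The paper's argument holds regardless of whether the greedy control is optimal, whereas yours would not. In the intended application this distinction matters for the logical flow: the right-continuity of $\underline{J}$ is used (together with optimality of $u^*$, established separately in Section~\ref{sec_lambda}) to feed into Theorem~\ref{ThMain}, rather than the other way around.

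Your ``alternative, cheaper'' remark is close to the paper's actual reasoning, but you still append ``plus the identification in task~(i)'' in order to get monotonicity. That step is not required: the Lemma alone, applied to decreasing sequences, already yields right-continuity without any appeal to monotonicity of $\underline{J}$ or to $\underline{J}=\underline{V}$.
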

As a consequence,  we can apply Theorem \ref{ThMain} to show that, if the (greedy) feedback policy given in \eqref{OptCtrlP1} is the unique optimal control to the Problem \ref{Prob1}, then it is also an optimal policy for Problem \ref{Prob2} and vice-versa. 

\subsection{Differential conditions on $\lambda$ and optimality of the greedy control}
\label{sec_lambda}
Let us now emphasize the conditions needed on $\lambda$ in order to obtain optimality of the feedback control $u^*$ given in \eqref{OptCtrlP1}. We present two methods.\\

\noindent\textbf{Method I} from \cite{MR_2022} consists in writing 
\[
udt=\pr{\frac{\gamma}{\beta s}-1}\frac{ds}{\gamma i}-\frac{di}{\gamma i}.
\]
As  a consequence, one gets
\[\lambda(s,i)udt=\frac{\lambda(s,i)\pr{\frac{\gamma}{\beta s}-1}}{\gamma i}ds-\frac{\lambda(s,i)}{\gamma i}di=:P(s,i)ds+Q(s,i)di.\]
One computes\[\partial_sQ(s,i)-\partial_iP(s,i)=-\frac{\partial_s\lambda(s,i)}{\gamma i}+\pr{\frac{\gamma}{\beta s}-1}\frac{\partial_i \lambda(s,i)-\gamma\frac{\lambda(s,i)}{\gamma i}}{\gamma i}.\]
Then, the condition in \cite{MR_2022} for optimality, based on the use of Green's Theorem, amounts to imposing $\partial_sQ-\partial_iP\leq 0$ to deal with the case in which $\frac{\gamma}{\beta}<s_0\leq \frac{\gamma}{\beta (1-\bar{u})}$.  \\

\noindent\textbf{Method II} from \cite{FGLX_2022}. Let us now refer to the conditions given in \eqref{Limax}.  One writes \[\tilde{l}_1(s,i,u):=\frac{\lambda(s,i)u}{\gamma i u}.\] 
Then, 
\begin{enumerate}
\item The first condition in \cite[Eq.~(15)]{FGLX_2022} (applicable for initial conditions as specified before), requires \begin{equation}\label{EqNeed1}\frac{\lambda(s_1(s_0,i_0;i^*),i^*)}{\gamma i^*}\leq \frac{\lambda(s_0,i_0)}{\gamma i_0}.\end{equation}The reader is recalled that $(s_1(s_0,i_0;i^*),i^*)\in\set{\pr{s^{s_0,i_0,0}(t),i^{s_0,i_0,0}(t)};\ t\geq 0}$. Then, the condition \eqref{EqNeed1} is obtained if, for instance, the function $t\mapsto \phi(t):=\frac{\lambda\pr{s^{s_0,i_0,0}(t),i^{s_0,i_0,0}(t)}}{\gamma i^{s_0,i_0,0}(t)}$ is non-increasing.  One readily computes (with the obvious notation $(s,i)=\pr{s^{s_0,i_0,0},i^{s_0,i_0,0}}$)
\[\phi'(t)=\frac{1}{\gamma i}\partial_s\lambda(s,i)\pr{-\beta si}+\pp{\frac{\partial_i \lambda(s,i)}{\gamma i} -\frac{\lambda(s,i)}{\gamma i^2}}\pr{\beta s-\gamma}i=\beta si\pr{\partial_sQ-\partial_i P}.\]The latter quantity is non-positive as soon as $\partial_sQ-\partial_i P\leq 0$.
We conclude that, in the case where $s_0\leq \frac{\gamma}{\beta (1-\bar{u})}$, the "$0$-singular arc-$0$" control is optimal with the two methods. This is, of course, a vivid illustration of our main result in the present paper.
\item The second condition in \eqref{Limax} amounts to have \begin{equation}\label{EqNeed2}\frac{\lambda(s_2(s_0,i_0;i^*),\theta^*-s_2(s_0,i_0;i^*)+\frac{\gamma}{\beta (1-\bar{u})}\log s_2(s_0,i_0;i^*))}{\gamma \pr{\theta^*-s_2(s_0,i_0;i^*)+\frac{\gamma}{\beta (1-\bar{u})}\log s_2(s_0,i_0;i^*)}}\leq \frac{\lambda(s_0,i_0)}{\gamma i_0}.\end{equation}
As before, $(s_2(s_0,i_0;i^*),i^*)$ belongs
to the reachable set \[\set{\pr{s^{s_0,i_0,0}(t),i^{s_0,i_0,0}(t)}:\ t\geq 0}.\]
Reasoning as we have done for case 1., the condition \eqref{EqNeed2} follows from the same condition $\partial_sQ-\partial_i P\leq 0$ (on a different part of the space as this time $s\geq \frac{\gamma}{\beta(1-\bar{u})}$).
The reader is invited to note that under the condition \eqref{EqNeed2}, owing to the result on duality, we are able to extend the optimality result in \cite{MR_2022} to any admissible $(s_0,i_0)$ beyond the DFE (disease free-equilibria) for $\tilde{\beta} :=\beta (1-\bar{u})$-contact driven SIR (i.e. extend it to configurations for which $s_0>\frac{\gamma}{\tilde{\beta}}=\frac{\gamma}{\beta (1-\bar{u})}$. 
\end{enumerate}
\color{black}
\bibliographystyle{abbrv}
\bibliography{biblio}

\def\cprime{$'$}
\begin{thebibliography}{10}

\bibitem{AFG_2022}
F.~Avram, L.~Freddi, and D.~Goreac.
\newblock {Optimal control of a SIR epidemic with ICU constraints and target
  objectives}.
\newblock {\em Applied Mathematics and Computation}, 418:126816, 2022.

\bibitem{barron_ishii_89}
E.~Barron and H.~Ishii.
\newblock The {B}ellman equation for minimizing the maximum cost.
\newblock {\em Nonlinear Anal., Theory Methods Appl.}, 13(9):1067--1090, 1989.

\bibitem{B_2015}
P.~Bettiol.
\newblock State constrained $l^\infty$ optimal control problems interpreted as
  differential games.
\newblock {\em Discrete and Continuous Dynamical Systems}, 35(9):3989--4017,
  2015.

\bibitem{BFZ2011}
O.~Bokanowski, N.~Forcadel, and H.~Zidani.
\newblock Deterministic state-constrained optimal control problems without
  controllability assumptions.
\newblock {\em ESAIM: COCV}, 17(4):995--1015, 2011.

\bibitem{Clarke}
F.~Clarke.
\newblock {\em {Optimization and Nonsmooth Analysis}}.
\newblock {SIAM Classics in Applied Mathematics}, 1990.

\bibitem{frankowska_plaskacz_98}
H.~Frankowska and S.~Plaskacz.
\newblock {Semi-continuous solutions of {H}amilton-{J}acobi-{B}ellman equations
  with state constraints}.
\newblock {\em Differential Inclusions and Optimal Control, vol. 2, Lecture
  Notes in Nonlinear Anal.}, pages 145--161, 1998.

\bibitem{frankowska_plaskacz_00}
H.~Frankowska and S.~Plaskacz.
\newblock Semicontinuous solutions of hamilton-jacobi-bel lman equations with
  degenerate state constraints.
\newblock {\em J. Math. Anal. Appl.}, 251:818--838, 2000.

\bibitem{frankowska_vinter_98}
H.~Frankowska and R.~Vinter.
\newblock Existence of neighbouring trajectories: applications to dynamic
  programming for state constraints optimal control problems.
\newblock {\em Journal of Optimization Theory and Applications}, 104(1):20--40,
  2000.

\bibitem{FGLX_2022}
L.~Freddi, D.~Goreac, J.~Li, and B.~Xu.
\newblock {SIR Epidemics with State-Dependent Costs and ICU Constraints: A
  Hamilton--Jacobi Verification Argument and Dual LP Algorithms}.
\newblock {\em Applied Mathematics {\&} Optimization}, 86(2):23, Jul 2022.

\bibitem{Miclo}
L.~Miclo, D.~Spiro, and J.~Weibull.
\newblock {Optimal epidemic suppression under an ICU constraint: An analytical
  solution}.
\newblock {\em Journal of Mathematical Economics}, 101:102669, 2022.

\bibitem{MR_2022}
E.~Molina and A.~Rapaport.
\newblock {An optimal feedback control that minimizes the epidemic peak in the
  SIR model under a budget constraint}.
\newblock {\em Automatica}, 146:110596, 2022.

\bibitem{Soner86_1}
H.~M. Soner.
\newblock Optimal control with state-space constraint. {I}.
\newblock {\em SIAM J. Control Optim.}, 24(6):552--561, 1986.

\end{thebibliography}
\end{document}